\newtheorem{theorem}{Theorem}[section]
\newtheorem{definition}[theorem]{Definition}
\newtheorem{lemma}[theorem]{Lemma}
\newtheorem{corollary}[theorem]{Corollary}
\newtheorem{remark}[theorem]{Remark}
\def\B{{\cal{B}}}
\def\N{{\cal{N}}}
\def\R{{\cal{R}}}
\def\core{{\scriptsize{\textcircled{\#}}}}
\def\ind{{\rm\,ind\/}}
\begin{document}

\vspace{5cm}

\begin{center}
\Large{\bf Group, Moore-Penrose, core and dual core inverse \\ in rings with involution}\footnote{The authors are supported by the Ministry
of Science, Republic of Serbia, grant no. 174007.}
\end{center}

\begin{center}
{\bf Dragan S. Raki\'c \footnote{Corresponding author.}, Neboj\v{s}a \v{C}. Din\v{c}i\'c and Dragan S. Djordjevi\'c} 
\end{center}

\begin{abstract}

Let $R$ be a ring with involution. The recently introduced notions of the core and dual core inverse are extended from matrix to an arbitrary $*$-ring case. It is shown that the group, Moore-Penrose, core and dual core inverse are closely related and they can be treated in the same manner using appropriate idempotents. The several characterizations of these inverses are given. Some new properties are obtained and some known results are generalized. A number of characterizations of EP elements in $R$ are obtained. It is shown that core and dual core inverse belong to the class of inverses along an element and to the class of $(b,c)$-inverses. The case when $R$ is algebra of all bounded linear operators on Hilbert space is specifically considered.

\medskip

2010 {\it  Mathematics Subject Classification\/}: 15A09, 16U99, 47A05.
\smallskip

{\it Keywords and phrases\/}: group inverse, Moore-Penrose inverse, core inverse, dual core inverse, ring with involution, EP element, idempotent, inverse along an element, $(b,c)$-inverse, bounded operator
\end{abstract}

\section{Introduction}

Let $M_n$ be the algebra of all $n\times n$ complex matrices.
The Moore-Penrose inverse (MP inverse for short) of matrix $A$ is the unique matrix $A^\dag$ satisfying
$$(1)\, AA^\dag A=A \quad (2)\, A^\dag AA^\dag =A^\dag \quad (3)\, (AA^\dag)^*=AA^\dag \quad (4)\, (A^\dag A)^*=A^\dag A.$$
The inverse was introduced by Moore \cite{Moore} and latter rediscovered independently by Bjerhammar \cite{Bjerhammar} and Penrose \cite{Penrose}.
When $\ind(A)\leq 1$ i.e. $\text{rank}(A)=\text{rank}(A^2)$, the group inverse of $A$ (see \cite{BIG}) is unique matrix $A^\#$ defined by
$$(1)\, AA^\#A=A \quad (2)\, A^\#AA^\#=A^\# \quad (5) \, AA^\#=A^\#A.$$
Recently, Baksalary and Trenkler introduced in \cite{BT} a new pseudoinverse of a matrix named core inverse.
\begin{definition}\label{core matrix def}\,\, \cite{BT}\,\,
A matrix $A^\core\in M_n$ is the core inverse of $A\in M_n$ if it satisfies
\begin{equation}
AA^\core =P_A \text{ and } \R(A^\core)\subseteq \R(A).
\end{equation}
\end{definition}
Here $P_A$ stands for the orthogonal projection on $\R(A)$.
The core inverse exists if and only if $\ind(A)\leq 1$ in which case it is unique. In the same paper authors defined one more inverse, $\tilde{A}$, which is closely related to core inverse. We call this inverse dual core inverse of $A$ and denoted by $A_\core$. It is defined by, \cite{BT}
$$A_\core A=P_{A^*} \text{ and } \R(A_\core)\subseteq \R(A^*).$$

From now on $R$ denotes a ring with involution; we say $*$-ring for short. Our aim is to extend the definitions of these inverses to the case of $*$-ring. We will show that all four kinds of inverses can be treated in the similar way. The MP and group inverse of an element $a\in R$ are defined in the same way as in the matrix case; if they exist then they are unique. Some characterizations of the MP invertibility of an element of a ring is given in \cite{KolihaDjordjevicDragana}.

In Section 2 we will give an equivalent definition of the core inverse of matrix which serves us as a definition of core inverse of an element of a ring with involution: $x\in R$ is a core inverse of $a\in R$ if $$axa=a, \, xR=aR \, \text{ and } \, Rx=Ra^*.$$ The analogous alternative definitions for group, MP and dual core inverse of $a\in R$ are given (\ref{sve def na jednom mestu}).
In the theorems \ref{idempotenti za grup inverz}--\ref{idempotenti za dual core inverz}, we will characterize the existence of these inverses by the existence of idempotent $q\in R$ and self-adjoint idempotents $p,r\in R$ satisfying  $aR=qR$, $Ra=Rq$, $pR=aR$ and $Rr=Ra$. Namely, $a\in R$ is group invertible if and only if idempotent $q$ exists; $a$ is MP invertible if and only if $p$ and $r$ exist; $a$ is core invertible if and only if $p$ and $q$ exist; $a$ is dual core invertible if and only if $r$ and $q$ exist. Using these idempotents we obtain appropriate matrix representations for $a$, $a^\#$, $a^\dag$, $a^\core$ and $a_\core$.
We will characterize the core and dual core inverse by the set of equations in theorems \ref{idempotenti za core inverz} and \ref{idempotenti za dual core inverz}. This result is new even in the case $R=M_n$.
We will obtain a number of new properties and generalize most of the known properties of core inverse of complex matrix, that make sense in a $*$-ring.
We note that in the matrix case, the study of generalized inverses uses mainly finite dimensional linear algebra methods. In our setting of arbitrary $*$-ring, we can not use these methods.

In Section 3, the EP elements will be characterized.

In Section 4, we will show that considered inverses belong to the class of inverses along an element, introduced by Mary in \cite{Mary} and to the class of outer generalized inverses introduced by Drazin in \cite{Drazin}.

In the last section we indicate how the concepts from previous sections can be extended to the case of Hilbert space operators.

In a sequel we give some preliminaries.
If $a\in R$ and there exists $x\in R$ such that $axa=a$ then we say that $a$ is von Neumann regular (regular for short) and $x$ is inner generalized inverse of $a$. If $y\in R$ and  $yay=y$ than $y$ is called outer generalized inverse of $a$. An element $x$ is called reflexive generalized inverse of $a$ if $x$ is both inner and outer generalized inverse of $a$. If $x$ satisfies equations $q_1,q_2,...,q_n$ then $x$ is called $\{q_1,q_2,...,q_n\}$ inverse of $a$. The set of all such inverses is denoted by $a\{q_1,q_2,...,q_n\}$. For example, $a\{1,2,5\}=\{a^\#\}$.
We write $R^{(1)}$, $R^\#$, $R^\dag$, $R^\core$, $R_\core$ for the set of all regular, group, MP, core, dual core invertible elements of a ring $R$ respectively. An element $a\in R$ is EP if $a^\#$ and $a^\dag$ exist and $a^\#=a^\dag$. We will denote by $aR$ and $Ra$ the right and left ideal generated by $a$; $aR=\{ax : x\in R\}$ and $Ra=\{xa : x\in R\}$. Also $aRb=\{axb : x\in R\}$. The right annihilator of $a$ is denoted by $a^\circ$ and is defined by $a^\circ = \{x\in R : ax=0\}$. Similarly, the left annihilator of $a$ is the set ${^\circ a}=\{x\in R : xa=0\}$. Finally, if $p,q\in R$ are idempotents
then arbitrary $x\in R$ can be written as $$x=pxq+px(1-q)+(1-p)xq+(1-p)x(1-q)$$ or in the matrix form
$$x=\bmatrix x_{1,1} & x_{1,2} \\ x_{2,1} & x_{2,2}\endbmatrix_{p\times q},$$ where $x_{1,1}=pxq$, $x_{1,2}=px(1-q)$, $x_{2,1}=(1-p)xq$, $x_{2,2}=(1-p)x(1-q)$.
If $x=(x_{i,j})_{p\times q}$ and $y=(y_{i,j})_{p\times q},$
then $x+y=(x_{i,j}+y_{i,j})_{p\times q}$. Moreover, if $r\in R$ is
idempotent and $z=(z_{i,j})_{q\times r}$, then one can use usual matrix rules in order to multiply $x$ and $z$.

\section{Equivalent definitions and properties of $a^\#$, $a^\dag$, $a^\core$ and $a_\core$}

In this section we will give several characterizations for group, MP, core and dual core inverse and obtain some properties. We note that the results stated in theorems \ref{ekvivalentna def za grup inverz}--\ref{idempotenti za dual core inverz} are new even in the case $R=M_n$.

First we show that considered inverses are reflexive generalized inverses with prescribed range and null space.
It is known that $A^\dag$ is reflexive generalized inverse of $A$ with range $\R(A^*)$ and null space $\N(A^*)$, \cite{BIG}. We write $$A^\dag =A^{(1,2)}_{\R(A^*),\N(A^*)}.$$ Also  \cite{BIG}, $$A^\#=A^{(1,2)}_{\R(A),\N(A)}.$$ To find a similar expression for core inverse, recall that $A^\core=A^\#P_A$, \cite{BT}. This means
\begin{equation}\label{core izmedju grup i MP}
  A^\core=A^\#AA^\dag,
\end{equation}
so we obtain
\begin{align*}
  &\R(A)=\R(A^\#)=\R(A^\#AA^\dag AA^\#)\subseteq \R(A^\# AA^\dag)=\R(A^\core)\subseteq \R(A^\#) \\
  &\N(A^*)=\N(A^\dag)=\N(A^\dag AA^\# AA^\dag)\supseteq \N(A^\# AA^\dag)=\N(A^\core)\supseteq \N(A^\dag).
\end{align*}
We see at once that $$A^\core=A^{(1,2)}_{\R(A),\N(A^*)}.$$ Similarly,
$$A_\core=A^{(1,2)}_{\R(A^*),\N(A)}.$$

The definition of $A^\core$ given in Definition \ref{core matrix def} does not make sense in rings. So, we need an equivalent definition.
\begin{lemma}\label{lema alternativna def za core inverz}
  A matrix $X\in M_n$ is the core inverse of $A\in M_n$ if and only if $AXA=A$, $XM_n=AM_n$ and $M_nX=M_nA^*$.
\end{lemma}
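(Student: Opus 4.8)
The plan is to show the two sets of conditions are equivalent by translating the range/null-space language of Definition 1.1 into the ideal language of the lemma, using the matrix-ring facts $XM_n = AM_n \iff \R(X)=\R(A)$ and $M_nX = M_nA^* \iff \R(X^*)=\R(A^*) \iff \N(X)=\N(A^*)$. So what I really need is that the pair of conditions $AXA=A$, $\R(X)=\R(A)$, $\N(X)=\N(A^*)$ characterizes the core inverse, which is exactly the expression $A^\core = A^{(1,2)}_{\R(A),\N(A^*)}$ already derived just above the statement. Hence the lemma will follow once I check that $\{AXA=A,\ \R(X)\subseteq\R(A),\ \N(X)\supseteq\N(A^*)\}$ forces $X$ to be the reflexive inverse $A^{(1,2)}_{\R(A),\N(A^*)}$, i.e.\ that $XAX=X$ comes for free and the inclusions become equalities.

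First I would prove the forward direction: assume $X=A^\core$, so $AX=P_A$ and $\R(X)\subseteq\R(A)$. Then $AXA = P_A A = A$ since every column of $A$ lies in $\R(A)$; this gives condition (1). From $AXA=A$ I get $\R(A)=\R(AXA)\subseteq\R(X)$, which together with the hypothesis $\R(X)\subseteq\R(A)$ yields $\R(X)=\R(A)$, hence $XM_n=AM_n$. For the left-ideal condition, from $AX=P_A=P_A^*=(AX)^*$ I compute $(AX)^* = X^*A^* = AX$, so $X^* = X^*A^*X^* \cdot(\text{something})$—more directly, $AX$ is the orthogonal projector onto $\R(A)$, so $(AX)^*=AX$ gives $X^*A^* = AX$, whence $\R(X^*) = \R(X^*A^*X^*)\subseteq \R(X^*A^*) = \R(AX) \subseteq \R(A)$; conversely $A = AXA = (AX)^* A = X^*A^*A$ shows $\R(A^*)\subseteq\R(X^*)$, and since $\R(X^*)\subseteq\R(A)$... here I must be careful, so instead I use $XAX=X$ (which I still need to establish) to get $\R(X^*)=\R(X^*A^*)=\R((AX)^*)=\R(AX)$ and $\R(AX)=\R(P_A)=\R(A)$, but I want $\R(X^*)=\R(A^*)$. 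The clean route: $AX=P_A$ and $\R(X)\subseteq\R(A)=\R(AX)$ forces $X = AX\cdot X' $ for the projector, and a short computation with $AX A = A$ plus $\R(X)\subseteq\R(A)$ gives $XAX=X$, after which $M_nX = M_n XAX \subseteq M_n A$... I will in fact run the argument through the already-established identity $A^\core=A^{(1,2)}_{\R(A),\N(A^*)}$ to avoid duplicating this: since $AX=P_A$ and $\R(X)\subseteq\R(A)$ pin down $X$ uniquely as that reflexive inverse, and $\N(A^*)=\N(P_A)$, I conclude $\N(X)=\N(P_A)=\N(A^*)$, equivalently $M_nX=M_nA^*$.

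For the converse, assume $AXA=A$, $XM_n=AM_n$, $M_nX=M_nA^*$; equivalently $\R(X)=\R(A)$ and $\N(X)=\N(A^*)$. From $AXA=A$ and $\R(X)\subseteq\R(A)$ I first derive $XAX=X$: write $X=AY$ for some $Y$ (since $\R(X)\subseteq\R(A)$), then $XAX = XA(AY)$—better, $AXAX = AX$ by (1) post-multiplied by $X$, and since $\R(X)\subseteq\R(A)$, left-multiplication by a left inverse of $A$ on its range kills the ambiguity; concretely, $X-XAX$ has range in $\R(X)\subseteq\R(A)$ and $A(X-XAX)=AX-AXAX=0$, so each column of $X-XAX$ lies in $\R(A)\cap\N(A)=0$ because $\ind(A)\le1$ (which is implied by the existence of $X$ with these properties, via $\R(A)=\R(X)=\R(A)$ and $\N(A)\subseteq\N(XA)$ combined with $\R(XA)=\R(X)=\R(A)$ forcing $\R(A^2)=\R(A)$). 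Thus $XAX=X$, so $X=A^{(1,2)}_{\R(A),\N(A^*)}$. Finally, $AX$ is then the projector onto $\R(A)$ along $\N(A^*)$, i.e.\ the orthogonal projector $P_A$, and $\R(X)\subseteq\R(A)$, so $X$ satisfies Definition 1.1 and $X=A^\core$.

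I expect the main obstacle to be the converse direction, specifically establishing that the ideal equalities force $\ind(A)\le1$ (equivalently $\R(A)\cap\N(A)=0$) so that $X$ is genuinely a reflexive inverse and $AX$ is idempotent; once $AX$ is known to be the idempotent with range $\R(A)$ and kernel $\N(A^*)$, identifying it with $P_A$ is immediate since an idempotent with Hermitian—i.e.\ with kernel orthogonal to range—is the orthogonal projector. The forward direction is routine given the already-derived formula $A^\core=A^{(1,2)}_{\R(A),\N(A^*)}$.
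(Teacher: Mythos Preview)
Your plan is correct but takes a genuinely different route from the paper, especially in the converse direction. For the forward implication the paper simply uses the formula $A^\core=A^\#AA^\dag$ (equation~(\ref{core izmedju grup i MP})): from $XA=A^\#A$ one gets $A=XA^2$, hence $AM_n\subseteq XM_n$; and from $X=A^\#(A^\dag)^*A^*$ one gets $M_nX\subseteq M_nA^*$, while the reverse inclusions are immediate. You instead invoke the characterization $A^\core=A^{(1,2)}_{\R(A),\N(A^*)}$, which is equally valid and arguably more conceptual. The real difference is in the converse. You first argue $\ind(A)\le 1$ (via $\R(X)=\R(A)\Rightarrow \R(A)=\R(AXA)\subseteq A(\R(A))=\R(A^2)$; your phrase ``$\R(XA)=\R(X)$'' is stronger than you need here), then use $\R(A)\cap\N(A)=0$ to deduce $XAX=X$, and finally identify $AX$ as the idempotent with range $\R(A)$ and kernel $\N(A^*)$, hence $P_A$. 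The paper's converse is much shorter and bypasses all of this: from $M_nX=M_nA^*$ write $X=VA^*$; then $X=V(AXA)^*=VA^*X^*A^*=XX^*A^*$, so $AX=AX(AX)^*$ is self-adjoint, and together with $(AX)^2=AX$ and $\R(AX)=\R(A)$ this gives $AX=P_A$ directly. Your approach has the side benefit of explicitly exhibiting $\ind(A)\le 1$ and $XAX=X$, while the paper's buys a two-line proof by exploiting the factorization $X=VA^*$ to get self-adjointness of $AX$ for free.
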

\begin{proof}
  Suppose that $X$ is the core inverse of $A$. It is clear that $XM_n\subseteq AM_n$ since $\R(X)\subseteq\R(A)$. By (\ref{core izmedju grup i MP}), we see that $AXA=A$ and $XA=A^\#A$, so $A=XA^2$, hence $AM_n\subseteq XM_n$. Also, $A^*=A^*(AX)^*=A^*AX$ so $M_nA^*\subseteq M_nX$. Finally, $X=A^\#AA^\dag =A^\#(A^\dag)^*A^*$ implies $M_nX\subseteq M_nA^*$. Conversely, suppose that $A=AXA$, $XM_n=AM_n$ and $M_nX=M_nA^*$. It follows that $\R(X)\subseteq \R(A)$ and there exist $V\in M_n$ such that $X=VA^*$. It is now clear that $(AX)^2=AX$, and $X=VA^*=VA^*X^*A^*=XX^*A^*$. Therefore $AX=AX(AX)^*$ which is self-adjoint, so $AX=P_A$.
\end{proof}

Similarly, we can show the analogous result for dual core inverse.

\begin{lemma}\label{lema alternativna def za dual core inverz}
  A matrix $X\in M_n$ is the dual core inverse of $A\in M_n$ if and only if $AXA=A$, $XM_n=A^*M_n$ and $M_nX=M_nA$.
\end{lemma}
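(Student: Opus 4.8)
The plan is to mirror the proof of Lemma \ref{lema alternativna def za core inverz}, exploiting the fact that the dual core inverse is obtained from the core inverse by passing to the involution. Recall from the excerpt that $A_\core = A^{(1,2)}_{\R(A^*),\N(A)}$ and, analogously to (\ref{core izmedju grup i MP}), that $A_\core = A^\dag A A^\#$. These two facts are the only structural inputs needed besides elementary range/null-space bookkeeping.

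For the forward direction, I would assume $X = A_\core$, so $A X A = A$ and $X A A^\# = X$ with $A X = $ self-adjoint is not quite it — rather I would use $X = A^\dag A A^\#$ directly. First, $\R(X) \subseteq \R(A^\dag) = \R(A^*)$ gives $X M_n \subseteq A^* M_n$. Next, from $A_\core A = P_{A^*}$ one gets $A X = A A^\dag A A^\# $; combined with $X A = P_{A^*}$ wait — I should be careful: the defining relations are $A_\core A = P_{A^*}$ and $\R(A_\core) \subseteq \R(A^*)$. From $X A = P_{A^*}$ and $A = P_{A^*} A$ (since $\R(A) = \R(A) $ and $P_{A^*}$ fixes $\R(A^*) = \R(A^\dag A)$, and $A = A^\dag A \cdot$ hmm) — the cleanest route is: $A^* = A^* A A^\dag$ wait. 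Let me instead say: since $X A = P_{A^*}$ is self-adjoint and idempotent with range $\R(A^*) = \R(X)$ (as $\R(X) \subseteq \R(A^*)$ and $X = X(XA)$ forces equality of ranges), we get $A = A P_{A^*} = A X A$, and $A = A(X A) = (A X) A$ shows nothing new; but $X A$ being the projection onto $\R(A^*)$ gives $A = (X A)^* $-type identities. Concretely: $X = P_{A^*} X = (XA)^* X = A^* X^* X$, hence $M_n X \subseteq M_n A$; and from $A = A X A$ we have $A = A X A$, so $A \in M_n X A \subseteq M_n X$ reversed — actually $A X A = A$ gives $A M_n = A X A M_n \subseteq X M_n$, so $A M_n \subseteq X M_n$; but we want $A M_n \subseteq X M_n$? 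No: we want $X M_n = A^* M_n$, and the reverse inclusion $A^* M_n \subseteq X M_n$ comes from $A^* = A^* A X = (X^* A^* ) \cdot$ hmm, from $X A = (XA)^*$ we get $A^* X^* = X A$, so $A^* = A^* X^* A^* \cdot$ no. I would use $A^* = (A X A)^* = A^* X^* A^*$, giving $A^* M_n \subseteq X^* A^* M_n$? That is not $X M_n$ either. The correct identity: since $X A = P_{A^*}$, $A^* = P_{A^*} A^* = X A A^*$, hence $A^* M_n \subseteq X M_n$. That closes it. For the converse, assume $A X A = A$, $X M_n = A^* M_n$, $M_n X = M_n A$. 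Then $\R(X) \subseteq \R(A^*)$ from the first ideal condition, and $X = U A$ for some $U$ from the second; then $(XA)^2 = X(AXA) = XA$ is idempotent, and $X = UA = U(AXA) = (UA)(XA) = X(XA)$, so using $X = UA$ and $X^* = A^* U^*$ we get $X A = X^* A^* U^* A = (XA)^*\cdot$-ish; more directly $X A = U A X A = $ and from $M_n X = M_n A$ we also get $A = W X$ for some $W$, so $X A = X W X$; combining, $X A = (U A)(W X)$ and one checks $X A$ equals its adjoint via $X A = X W X = (A^* U^*)(W X)$ — I would instead follow the lemma's template exactly: from $M_n X = M_n A$ there is $W$ with $A = W X$, so $A X = W X X = W X (XA)$-no; rather $X A$ is idempotent and $X A = (XA)(XA)$, and $X = X(XA)$ combined with $X \in M_n A$, i.e. the left-ideal condition, forces $XA$ self-adjoint by the same manipulation $XA = (XA)(XA) = (XA) \cdot$ — the paper does it by writing $X = V A^*$ in the core case and I will write $A = W X$ here and symmetrise. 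Then $XA$ self-adjoint plus idempotent plus $\R(XA) = \R(X) \subseteq \R(A^*)$, together with $\R(A^*) = \R(A^* X^*) \subseteq \R(XA)$ coming from $A = WX \Rightarrow A^* = X^* W^*$, wait that needs $A^* \in X^* M_n$; from $A = WX$, $A^* = X^* W^*$, hmm that gives $A^* M_n \subseteq X^* M_n$ not $XA M_n$. I will instead get $\R(A^*) \subseteq \R(XA)$ from: $XA$ idempotent self-adjoint so $XA = P_{\R(XA)}$, and $\R(A) \supseteq \R(AXA) = \R(A)$ trivial, but $A = AXA \Rightarrow A^* = A^* X^* A^* $, so every $A^* y = A^*(X^* A^* y) = (A X)^* A^* y$; since $A X$ is also idempotent (from $AXA = A$), $(AX)^*$ is the projection and $\R(A^*) = \R((AX)^*)$... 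I think the symmetric bookkeeping works out and I will present it cleanly in the final text.

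Given the length of the above, in the actual proof I would simply write: ``The proof is obtained from Lemma \ref{lema alternativna def za core inverz} by applying the involution, since $X$ is the dual core inverse of $A$ if and only if $X^*$ is the core inverse of $A^*$'' — verifying that last equivalence directly from the definitions ($A_\core A = P_{A^*}$ and $\R(A_\core) \subseteq \R(A^*)$ transform under $*$ into $A^* A_\core^* = P_{(A^*)^*}^* = $ hmm, $(A_\core A)^* = A^* A_\core^*$ and $P_{A^*}$ is self-adjoint so $A^* A_\core^* = P_{A^*} = P_{(A^*)^{*\,*}}$; one needs $P_{A^*} = (A^*)((A^*)^\core)\cdot$... actually $A^* (A^*)^\core = P_{A^*}$ by Definition \ref{core matrix def} applied to $A^*$, so $A_\core^* = (A^*)^\core$), and then the three conditions $AXA = A$, $XM_n = A^* M_n$, $M_n X = M_n A$ are exactly the images under $*$ of $A^* X^* A^* = A^*$, $M_n X^* = M_n (A^*)^*$, $X^* M_n = (A^*)^* M_n$ — wait $(XM_n)^* = M_n X^*$ and $(A^* M_n)^* = M_n A$, yes; $(M_n X)^* = X^* M_n$ and $(M_n A)^* = A^* M_n$, yes. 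So the three conditions for $X$ being dual core inverse of $A$ are precisely the $*$-transforms of the three conditions in Lemma \ref{lema alternativna def za core inverz} characterising $X^*$ as core inverse of $A^*$. This reduction is clean and avoids all the annotation above.

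The main obstacle is simply making the ``apply the involution'' reduction airtight, in particular the identity $A_\core^* = (A^*)^\core$, i.e. that the dual core inverse of $A$ is the adjoint of the core inverse of $A^*$; once that is recorded, Lemma \ref{lema alternativna def za core inverz} does all the work and no fresh computation is needed. If one prefers a self-contained argument, the obstacle is instead the symmetric range/annihilator bookkeeping in the converse direction — showing $XA$ is self-adjoint from $M_n X = M_n A$ — which is handled exactly as the self-adjointness of $AX$ was handled in the proof of Lemma \ref{lema alternativna def za core inverz}, with the roles of left and right multiplication interchanged.
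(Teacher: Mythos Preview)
The paper gives no explicit proof here, merely stating that the argument is analogous to Lemma~\ref{lema alternativna def za core inverz}; what is intended is the direct left--right mirror of that proof (use $M_nX=M_nA$ to write $X=VA$, deduce $XA=(XA)^*(XA)$ is self-adjoint, hence $XA=P_{A^*}$, etc.). Your final proposal --- reduce to Lemma~\ref{lema alternativna def za core inverz} via the identity $A_\core^{\,*}=(A^*)^\core$ --- is a correct and slightly slicker packaging of the same idea, and your check that the three ideal conditions for $X$ and $A$ are precisely the $*$-images of the three conditions of Lemma~\ref{lema alternativna def za core inverz} for $X^*$ and $A^*$ is accurate.

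The one point you leave loose is the verification of the identity $A_\core^{\,*}=(A^*)^\core$ itself: you check $A^*A_\core^{\,*}=P_{A^*}$ but never verify the range condition $\R(A_\core^{\,*})\subseteq\R(A^*)$, and uniqueness of the core inverse requires both. The cleanest repair is to bypass the definition and use $A_\core=A^\dag AA^\#$ (the dual of (\ref{core izmedju grup i MP}), which you already invoke) together with the standard identities $(A^\#)^*=(A^*)^\#$ and $(A^\dag)^*=(A^*)^\dag$ to compute
\[
(A^*)^\core=(A^*)^\#A^*(A^*)^\dag=(A^\#)^*A^*(A^\dag)^*=(A^\dag AA^\#)^*=A_\core^{\,*}.
\]
With that in hand your reduction is complete. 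Your fallback self-contained argument (mirroring the bookkeeping of Lemma~\ref{lema alternativna def za core inverz} with left and right swapped) is also correct and is presumably what the paper's ``similarly'' means; the lengthy scratchwork preceding your final two paragraphs should simply be discarded.
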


Now, we can give the extensions of the concepts of the core and dual core inverse from $M_n$ to $R$.

\begin{definition}\label{definicija core inverse}
  Let $a\in R$. An element $a^\core\in R$ satisfying
  $$aa^\core a=a, \quad a^\core R=aR \quad \text{and} \quad Ra^\core =Ra^*$$
  is called core inverse of $a$.
\end{definition}

\begin{definition}\label{definicija dual core inverse}
  Let $a\in R$. An element $a_\core\in R$ satisfying
  $$aa_\core a=a, \quad a_\core R=a^*R \quad \text{and} \quad Ra_\core =Ra$$
  is called dual core inverse of $a$.
\end{definition}

In the similar way we can give the characterizations of the group and MP inverse. First we need some auxiliary lemmas.

\begin{lemma}\label{podskupovi 1}
Let $a,b\in R$. Then:
\begin{enumerate}[{\rm (i)}]
  \item If $aR\subseteq bR$ then $^\circ b\subseteq {^\circ a}$.
  \item If $b\in R^{(1)}$ and $^\circ b\subseteq {^\circ a}$ then $aR\subseteq bR$.
\end{enumerate}
\end{lemma}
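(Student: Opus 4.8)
The plan is to prove the two implications by direct manipulation of the annihilator and ideal conditions, using the regularity hypothesis in (ii) to produce an idempotent that recovers $b$ from its action.

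For part (i), suppose $aR \subseteq bR$. Then in particular $a \in aR \subseteq bR$, so there exists $t \in R$ with $a = bt$. Now take any $x \in {}^\circ b$, i.e. $xb = 0$. Then $xa = x(bt) = (xb)t = 0$, so $x \in {}^\circ a$. Hence ${}^\circ b \subseteq {}^\circ a$. This direction requires no regularity and is essentially one line.

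For part (ii), suppose $b \in R^{(1)}$, so there is an inner inverse $b^- \in R$ with $bb^-b = b$, and suppose ${}^\circ b \subseteq {}^\circ a$. The idea is that $1 - bb^-$ is a left annihilator-type element of $b$: indeed $(1 - bb^-)b = b - bb^-b = b - b = 0$, so $1 - bb^- \in {}^\circ b$. By hypothesis $1 - bb^- \in {}^\circ a$, i.e. $(1 - bb^-)a = 0$, which gives $a = bb^-a = b(b^-a) \in bR$. Since $a \in bR$ and $bR$ is a right ideal, $aR \subseteq bR$. The only subtlety here is recognizing that the correct ``test element'' to feed into the annihilator inclusion is $1 - bb^-$ rather than something more complicated; once that is seen, the computation is immediate.

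I do not anticipate a genuine obstacle in this lemma; it is a routine but useful bookkeeping fact. If anything, the point to be careful about is that part (ii) genuinely needs the regularity of $b$ — without an inner inverse one cannot form the idempotent $bb^-$ that splits off $a$ into $bR$ — and that the analogous right-handed statement (relating $Ra \subseteq Rb$ to $a^\circ \subseteq b^\circ$) would be proved symmetrically. These will presumably be used later to pass freely between the ideal-membership conditions in Definitions \ref{definicija core inverse} and \ref{definicija dual core inverse} and annihilator conditions.
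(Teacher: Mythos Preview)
Your proof is correct and follows exactly the same approach as the paper: for (i) write $a=bt$ and test against an arbitrary left annihilator of $b$, and for (ii) use the test element $1-bb^{(1)}$ to conclude $a=bb^{(1)}a\in bR$. There is no meaningful difference between your argument and the paper's.
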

\begin{proof}
  (i): Suppose that $aR\subseteq bR$ and $ub=0$ for some $u\in R$. There exists $x\in R$ such that $a=bx$ so $ua=ubx=0$.

  (ii): Suppose now that $^\circ b\subseteq {^\circ a}$ and $b^{(1)}\in b\{1\}$. Since $(1-bb^{(1)})b=0$ we have $(1-bb^{(1)})a=0$ so $a=bb^{(1)}a$. Therefore, $aR\subseteq bR$.
\end{proof}

\begin{lemma}\label{podskupovi 2}
Let $a,b\in R$. Then:
\begin{enumerate}[{\rm (i)}]
  \item If $Ra\subseteq Rb$ then $b^\circ\subseteq a^\circ$.
  \item If $b\in R^{(1)}$ and $b^\circ\subseteq a^\circ$ then $Ra\subseteq Rb$.
\end{enumerate}
\end{lemma}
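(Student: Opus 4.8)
The plan is to prove Lemma~\ref{podskupovi 2} by exploiting the left-right symmetry with Lemma~\ref{podskupovi 1}, which has just been established. The two statements are mirror images of each other under the obvious anti-isomorphism of $R$, so the proof will be a verbatim transcription of the previous one with products reversed. Concretely, for part (i) I would assume $Ra\subseteq Rb$ and take $u\in b^\circ$, i.e. $bu=0$; since $Ra\subseteq Rb$ there is $x\in R$ with $a=xb$, whence $au=xbu=0$, so $u\in a^\circ$. For part (ii), assuming $b$ is regular with inner inverse $b^{(1)}$, the element $b-bb^{(1)}b=0$ gives $b(1-b^{(1)}b)=0$, so $1-b^{(1)}b\in b^\circ\subseteq a^\circ$, hence $a(1-b^{(1)}b)=0$, i.e. $a=ab^{(1)}b\in Rb$, and therefore $Ra\subseteq Rb$.

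There is essentially no obstacle here: the only thing to be careful about is keeping track of which side the annihilator sits on and which side the factorization appears on, since $Ra\subseteq Rb$ corresponds to a factorization $a=xb$ (left multiple), not $a=bx$. Since everything is routine, the write-up can simply say ``analogous to Lemma~\ref{podskupovi 1}'' or spell out the two short lines as above.

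\begin{proof}
  (i): Suppose that $Ra\subseteq Rb$ and $bu=0$ for some $u\in R$. There exists $x\in R$ such that $a=xb$, so $au=xbu=0$.

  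(ii): Suppose now that $b^\circ\subseteq a^\circ$ and $b^{(1)}\in b\{1\}$. Since $b(1-b^{(1)}b)=0$ we have $a(1-b^{(1)}b)=0$, so $a=ab^{(1)}b$. Therefore, $Ra\subseteq Rb$.
\end{proof}
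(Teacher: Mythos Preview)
Your proof is correct and is precisely the left--right dual of the paper's proof of Lemma~\ref{podskupovi 1}. The paper in fact states Lemma~\ref{podskupovi 2} without proof, leaving it implicit by symmetry, so your write-up is exactly what the authors had in mind.
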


\begin{theorem}\label{ekvivalentna def za grup inverz}
  Let $a,x\in R$. The following statements are equivalent:
  \begin{enumerate}[{\rm(i)}]
    \item $a$ is group invertible and $x=a^\#$.
    \item $axa=a$, $xR=aR$ and $Rx=Ra$.
    \item $axa=a$, ${^\circ x}={^\circ a}$ and $x^\circ =a^\circ$.
    \item $axa=a$, $xR\subseteq aR$ and $Rx\subseteq Ra$.
    \item $axa=a$, $^\circ a\subseteq {^\circ x}$ and $a^\circ\subseteq x^\circ$.
  \end{enumerate}
\end{theorem}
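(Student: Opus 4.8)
The plan is to establish the chain of implications (i) $\Rightarrow$ (ii) $\Rightarrow$ (iii) $\Rightarrow$ (i), together with (ii) $\Leftrightarrow$ (iv) and (iii) $\Leftrightarrow$ (v), using the annihilator/ideal translations supplied by Lemmas \ref{podskupovi 1} and \ref{podskupovi 2}. The key observation throughout is that the defining equation $axa=a$ forces $a$ to be von Neumann regular with $x$ as an inner inverse, so the hypotheses of part (ii) of each of those lemmas are automatically available; this is what lets me pass freely between the one-sided ideal conditions and the annihilator conditions, and between equalities and the (a priori weaker) inclusions.

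First I would prove (i) $\Rightarrow$ (ii): if $x=a^\#$ then $axa=a$, and from $xax=x$ and $ax=xa$ one gets $x=a^\#(a^\#a^2)=\dots$; concretely $a = a^\#a^2 \in xR$ and $a=a^2a^\# \in Rx$ give $aR\subseteq xR$ and $Ra\subseteq Rx$, while $x=a^\#aa^\#\in aR\cap Ra$ (using $ax=xa$) gives the reverse inclusions, so $xR=aR$ and $Rx=Ra$. Next, (ii) $\Rightarrow$ (iii) is immediate from Lemma \ref{podskupovi 1}(i) and Lemma \ref{podskupovi 2}(i) applied in both directions: $xR=aR$ yields ${}^\circ x={}^\circ a$, and $Rx=Ra$ yields $x^\circ=a^\circ$. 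The heart of the argument is (iii) $\Rightarrow$ (i). Here I assume $axa=a$, ${}^\circ x={}^\circ a$, $x^\circ=a^\circ$. Since $axa=a$, $a$ is regular; since $x^\circ \subseteq a^\circ$ and $a$ (being regular) may be used in Lemma \ref{podskupovi 2}(ii), we get $Rx\subseteq Ra$, and symmetrically $xR\subseteq aR$ (using ${}^\circ x\subseteq{}^\circ a$ and regularity of $a$ in Lemma \ref{podskupovi 1}(ii)). Thus $x=sa=au$ for some $s,u\in R$. Now I would verify that $x$ is the group inverse directly: from $x=au$ and $axa=a$ one should first upgrade $x$ to a reflexive inverse — one can replace $x$ by $xax$ if necessary, but in fact $xR=aR$ and $Rx=Ra$ (which also follow, since regularity of $x$ is forced too: $a\in xR$ gives $a=xv$, then $x = sa = sxv$, and one checks $x$ is regular) let me write $x=xax'$-type identities; the cleanest route is to show $xax=x$ and $ax=xa$. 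For idempotency/reflexivity: from $x=sa$ we get $xax=sa\cdot x = s(ax)$, and $ax = a\cdot sa \cdot(\text{something})$... the commutation $ax=xa$ is obtained by writing $ax = a(sa)$ and $xa=(au)a$ and using that both $1-ax$ and $1-xa$ annihilate $a$ appropriately on the correct side, then invoking the annihilator equalities to transfer. Concretely: $(ax)a = a = a(xa)$, so $a(1-xa)=0$ and $(1-ax)a=0$; hence $1-xa\in a^\circ = x^\circ$ so $x(1-xa)=0$, i.e. $x=x\cdot xa$... and here I must be careful to instead get $xax=x$ and then the commutation — I would finish by showing $ax$ and $xa$ are idempotents with the same right and left ideals as determined by $a$, forcing $ax=xa$. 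Once $axa=a$, $xax=x$, $ax=xa$ are in hand, $x=a^\#$ by definition, proving (i).

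Finally, (ii) $\Leftrightarrow$ (iv) and (iii) $\Leftrightarrow$ (v) are bookkeeping: (ii) $\Rightarrow$ (iv) and (iii) $\Rightarrow$ (v) are trivial weakenings, while (iv) $\Rightarrow$ (ii) follows because $axa=a$ makes both $a$ and $x$ regular ($x=xax$ after the standard replacement, or directly from $xR\subseteq aR$ plus $axa=a$), so the reverse inclusions $aR\subseteq xR$, $Ra\subseteq Rx$ can be recovered exactly as in the (i) $\Rightarrow$ (ii) step once one knows $x$ is an inner inverse of $a$ lying in $aR\cap Ra$; similarly (v) $\Rightarrow$ (iii) uses Lemmas \ref{podskupovi 1} and \ref{podskupovi 2} with the regularity of $a$ to turn the one-sided annihilator inclusions into the reverse ideal inclusions, hence equalities. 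I expect the main obstacle to be the bootstrapping in (iii) $\Rightarrow$ (i): extracting the commutativity $ax=xa$ and the reflexivity $xax=x$ from the bare regularity condition $axa=a$ plus the annihilator equalities, without at any point assuming $x$ is already an outer inverse. The trick is to use $axa=a$ to produce the two one-sided factorizations $x=sa=au$ via the Lemmas, then feed these back through the identities $a(1-xa)=0$ and $(1-ax)a=0$ together with $x^\circ=a^\circ$ and ${}^\circ x={}^\circ a$.
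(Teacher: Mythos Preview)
Your core idea for the hard implication is the same as the paper's, but your structural plan creates a genuine gap in the ``bookkeeping'' steps.

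For (iii) $\Rightarrow$ (i) you already have everything you need and then talk yourself out of it. From $a(1-xa)=0$ and $a^\circ\subseteq x^\circ$ you correctly get $x(1-xa)=0$, i.e.\ $x=x^2a$; symmetrically, $(1-ax)a=0$ and ${}^\circ a\subseteq{}^\circ x$ give $(1-ax)x=0$, i.e.\ $x=ax^2$. Now simply compute $ax=ax^2a=xa$ and $xax=x^2a=x$, and you are done. There is no need to pass through factorizations $x=sa=au$ or to compare the ideals of the idempotents $ax$ and $xa$. Note, crucially, that this computation only uses the \emph{inclusions} ${}^\circ a\subseteq{}^\circ x$ and $a^\circ\subseteq x^\circ$, not the equalities: it is really a proof of (v) $\Rightarrow$ (i), exactly as the paper does it.

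The gap is in your proposed direct proofs of (iv) $\Rightarrow$ (ii) and (v) $\Rightarrow$ (iii). For (iv) $\Rightarrow$ (ii) you say the reverse inclusions $aR\subseteq xR$ and $Ra\subseteq Rx$ ``can be recovered exactly as in the (i) $\Rightarrow$ (ii) step'', but in that step you used $a=xa^2$ and $a=a^2x$, which are group-inverse identities you do not yet have under (iv). Knowing only that $x$ is an inner inverse of $a$ lying in $aR\cap Ra$ does not by itself force $a\in xR$. Likewise, for (v) $\Rightarrow$ (iii) you invoke Lemmas~\ref{podskupovi 1} and~\ref{podskupovi 2} ``with the regularity of $a$'', but part (ii) of those lemmas, applied with $b=a$ regular, sends ${}^\circ a\subseteq{}^\circ x$ to $xR\subseteq aR$ --- the inclusion you already had, not its reverse. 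To get the missing directions you would need $x$ regular, which is not given.

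The clean fix is the paper's: run the single chain (i) $\Rightarrow$ (ii) $\Rightarrow$ (iii) $\Rightarrow$ (iv) $\Rightarrow$ (v) $\Rightarrow$ (i). The middle three arrows are genuinely trivial (Lemmas~\ref{podskupovi 1}(i) and~\ref{podskupovi 2}(i) for (ii) $\Rightarrow$ (iii) and (iv) $\Rightarrow$ (v); pure weakening for (iii) $\Rightarrow$ (iv)), and the only substantive step, (v) $\Rightarrow$ (i), is precisely the two-line annihilator computation you already found.
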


\begin{proof}
  (i) $\Rightarrow$ (ii): We have $a=axa=aax=xaa$ and $x=xax=xxa=axx$ so $xR=aR$ and $Rx=Ra$.

  (ii) $\Rightarrow$ (iii) $\Rightarrow$ (iv) $\Rightarrow$ (v) follows by lemmas \ref{podskupovi 1} and \ref{podskupovi 2}.

  (v) $\Rightarrow$ (i): From $axa=a$ it follows that $ax-1\in {^\circ a}\subseteq {^\circ x}$ and $1-xa\in a^\circ\subseteq x^\circ$ so $(ax-1)x=0$ and $x(1-xa)=0$. Now, $x=ax^2=x^2a$, hence $ax=ax^2a=xa$ and $xax=x^2a=x$. By the uniqueness of the group inverse, $x=a^\#$.
\end{proof}

\begin{theorem}\label{ekvivalentna def za MP inverz}
  Let $a,x\in R$. The following statements are equivalent:
  \begin{enumerate}[{\rm(i)}]
    \item $a$ is MP invertible and $x=a^\dag$.
    \item $axa=a$, $xR=a^*R$ and $Rx=Ra^*$.
    \item $axa=a$, ${^\circ x}={^\circ (a^*)}$ and $x^\circ =(a^*)^\circ$.
    \item $axa=a$, $xR\subseteq a^*R$ and $Rx\subseteq Ra^*$.
    \item $axa=a$, ${^\circ (a^*)}\subseteq {^\circ x}$ and $(a^*)^\circ\subseteq x^\circ$.
  \end{enumerate}
\end{theorem}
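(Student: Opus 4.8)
The plan is to mirror the proof of Theorem \ref{ekvivalentna def za grup inverz}, with $a^*$ playing everywhere the role that $a$ played there, and with the four Penrose equations replacing the commuting identity $(5)$. I will prove the cycle (i) $\Rightarrow$ (ii) $\Rightarrow$ (iii) $\Rightarrow$ (iv) $\Rightarrow$ (v) $\Rightarrow$ (i).

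For (i) $\Rightarrow$ (ii), assume $x=a^\dag$. From $x=xax$ together with self-adjointness of $xa$ and $ax$ I would write $x=(xa)^*x=a^*x^*x$ and $x=x(ax)^*=xx^*a^*$, which give $xR\subseteq a^*R$ and $Rx\subseteq Ra^*$. Dually, taking adjoints in $a=aa^\dag a$ and using that $aa^\dag$ and $a^\dag a$ are self-adjoint yields $a^*=a^\dag(aa^*)$ and $a^*=(a^*a)a^\dag$, so $a^*R\subseteq a^\dag R=xR$ and $Ra^*\subseteq Ra^\dag=Rx$. Combining the inclusions gives the equalities in (ii).

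The implications (ii) $\Rightarrow$ (iii) $\Rightarrow$ (iv) $\Rightarrow$ (v) follow from Lemmas \ref{podskupovi 1} and \ref{podskupovi 2} applied with $b=a^*$, exactly as in Theorem \ref{ekvivalentna def za grup inverz}. The only point to note is that the part~(ii) of each lemma requires $b$ regular, and here $b=a^*$ is regular because $axa=a$ forces $a^*x^*a^*=a^*$, i.e. $x^*\in a^*\{1\}$.

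For (v) $\Rightarrow$ (i), from $axa=a$ we again get $a^*x^*a^*=a^*$, hence $1-a^*x^*\in{}^\circ(a^*)$ and $1-x^*a^*\in(a^*)^\circ$. The hypotheses then give $(1-a^*x^*)x=0$ and $x(1-x^*a^*)=0$, that is, $x=a^*x^*x$ and $x=xx^*a^*$. From these two identities I obtain $ax=axx^*a^*=(ax)(ax)^*$ and $xa=a^*x^*xa=(xa)^*(xa)$, so $ax$ and $xa$ are self-adjoint; and then $xax=x(ax)=x(ax)^*=xx^*a^*=x$. Together with $axa=a$ these are the four Penrose equations, so $x=a^\dag$ by uniqueness of the Moore--Penrose inverse. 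As in Theorem \ref{ekvivalentna def za grup inverz}, the only genuinely nontrivial step is this last implication; the key observation is that the regularity of $a^*$ is precisely what places usable elements into the annihilators ${}^\circ(a^*)$ and $(a^*)^\circ$, after which the self-adjointness of $ax$, $xa$ and the outer-inverse identity $xax=x$ fall out of the resulting equations $x=a^*x^*x=xx^*a^*$ with no further computation.
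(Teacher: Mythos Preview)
Your proof is correct and follows essentially the same route as the paper's: the same cycle (i) $\Rightarrow$ (ii) $\Rightarrow$ (iii) $\Rightarrow$ (iv) $\Rightarrow$ (v) $\Rightarrow$ (i), the same use of Lemmas \ref{podskupovi 1} and \ref{podskupovi 2} for the middle implications, and in (v) $\Rightarrow$ (i) the identical derivation of $x=a^*x^*x=xx^*a^*$ from the annihilator inclusions, leading to self-adjointness of $ax$, $xa$ and then $xax=x$. Your explicit remark that regularity of $a^*$ (needed for Lemma \ref{podskupovi 1}(ii) and \ref{podskupovi 2}(ii) in the step (iii) $\Rightarrow$ (iv)) comes from $a^*x^*a^*=a^*$ is a helpful detail the paper leaves implicit.
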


\begin{proof}
  (i) $\Rightarrow$ (ii): By the properties of MP inverse we easily obtain
   $a^*=xaa^*=a^*ax$ and $x=a^*x^*x=xx^*a^*$ so $xR=a^*R$ and $Rx=Ra^*$.

  (ii) $\Rightarrow$ (iii) $\Rightarrow$ (iv) $\Rightarrow$ (v) follows by lemmas \ref{podskupovi 1} and \ref{podskupovi 2}.

  (v) $\Rightarrow$ (i): Since $a^*x^*a^*=a^*$, we see that $(1-x^*a^*)\in (a^*)^\circ\subseteq x^\circ$ and $(1-a^*x^*)\in {^\circ (a^*)}\subseteq {^\circ x}$. Therefore, $x=xx^*a^*$ and $x=a^*x^*x$. This yields $ax=ax(ax)^*$ and $xa=(xa)^*xa$; hence $ax$ and $xa$ are self-adjoint. Finally, $xax=x(ax)^*=xx^*a^*=x$. It follows that $x=a^\dag$.
\end{proof}
Definitions \ref{definicija core inverse}, \ref{definicija dual core inverse} and theorems \ref{ekvivalentna def za grup inverz} (ii), \ref{ekvivalentna def za MP inverz} (ii) show that group, MP, core and dual core inverses can be defined analogously:
\begin{equation}\label{sve def na jednom mestu}
\begin{aligned}
& x\in R \text{ is group inverse of } a \text{ if and only if } axa=a, \, xR=aR, \, Rx=Ra,\\
& x\in R \text{ is MP inverse of } a \text{ if and only if } axa=a, \, xR=a^*R, \, Rx=Ra^*,\\
& x\in R \text{ is core inverse of } a \text{ if and only if } axa=a, \, xR=aR, \, Rx=Ra^*\\
& x\in R \text{ is dual core inverse of } a \text{ if and only if } axa=a, \, xR=a^*R, \, Rx=Ra.
\end{aligned}
\end{equation}
As we can see, the four inverses are closely related and it can be said that they form a certain subclass of the class of all inner inverses. Moreover, we can conclude that core and dual core inverse are between group and MP inverse.

We will now show that the existence of considered inverses is closely related with existence of some idempotents. First, we give some auxiliary results.

\begin{lemma}\label{lema jedinstveno q}
  If $q_1$ and $q_2$ are idempotents such that $Rq_1\subseteq Rq_2$ and $q_2R\subseteq q_1R$ then $q_1=q_2$.
\end{lemma}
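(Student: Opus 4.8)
The plan is to show $q_1 = q_2$ by establishing the two inclusions $q_1 R \subseteq q_2 R$ and $q_2 R \subseteq q_1 R$ (equivalently, that $q_1$ and $q_2$ generate the same principal right ideal) together with the hypothesis about left ideals, and then exploit idempotency. More concretely, I would argue directly: from $Rq_1 \subseteq Rq_2$ we get $q_1 \in Rq_2$, so $q_1 = q_1 q_2$ because $q_2$ acts as a right identity on $Rq_2$ (indeed, if $q_1 = sq_2$ then $q_1 q_2 = sq_2 q_2 = sq_2 = q_1$). Symmetrically, from $q_2 R \subseteq q_1 R$ we get $q_2 \in q_1 R$, so $q_1 q_2 = q_2$ because $q_1$ acts as a left identity on $q_1 R$ (if $q_2 = q_1 t$ then $q_1 q_2 = q_1 q_1 t = q_1 t = q_2$). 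Combining the two identities $q_1 = q_1 q_2$ and $q_1 q_2 = q_2$ immediately gives $q_1 = q_2$.

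The key steps in order are therefore: (1) unpack $Rq_1 \subseteq Rq_2$ to the membership $q_1 \in Rq_2$ and use $q_2^2 = q_2$ to conclude $q_1 q_2 = q_1$; (2) unpack $q_2 R \subseteq q_1 R$ to the membership $q_2 \in q_1 R$ and use $q_1^2 = q_1$ to conclude $q_1 q_2 = q_2$; (3) chain these equalities. Each step is a one-line computation, so there is essentially no technical obstacle here; the only thing to be careful about is which idempotent is absorbed on which side, i.e.\ keeping straight that membership in a \emph{left} ideal $Rq_2$ lets us cancel $q_2$ on the \emph{right}, while membership in a \emph{right} ideal $q_1 R$ lets us cancel $q_1$ on the \emph{left}. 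No appeal to regularity or to the earlier lemmas is needed; this is purely formal and works in any (possibly non-unital, though the paper's ring is unital) ring.

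If one preferred a slightly more structural phrasing, one could instead note that $Rq_1 \subseteq Rq_2$ says $q_1 R q_1 \subseteq \dots$—but that overcomplicates things; the direct computation above is the cleanest route and is what I would write down. I expect the whole proof to be two or three lines.
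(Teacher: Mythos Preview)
Your proposal is correct and matches the paper's own proof essentially line for line: the paper also writes $q_1 = uq_2$ to deduce $q_1q_2 = q_1$, then uses $q_2R \subseteq q_1R$ to get $q_1q_2 = q_2$, and concludes $q_1 = q_2$. There is no meaningful difference in approach or in level of detail.
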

\begin{proof}
  If $Rq_1\subseteq Rq_2$ then $q_1=uq_2$ for some $u\in R$ so $q_1q_2=uq_2^2=uq_2=q_1$. Similarly, $q_2R\subseteq q_1R$ implies $q_1q_2=q_2$.
\end{proof}

\begin{lemma}\label{lema jedinstveni p i r}
  If $p_1$ and $p_2$ are self-adjoint idempotents such that $Rp_1=Rp_2$ or $p_1R=p_2R$ then $p_1=p_2$.
\end{lemma}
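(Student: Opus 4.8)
The plan is to mimic the very short argument used in Lemma \ref{lema jedinstveno q}, exploiting the fact that a self-adjoint idempotent is determined by \emph{one} of its one-sided ideals, because the involution converts a statement about $Rp$ into one about $pR$. Concretely, I would split into the two cases given in the hypothesis.

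First suppose $Rp_1 = Rp_2$. Since $p_1 \in Rp_1 = Rp_2$, we may write $p_1 = u p_2$ for some $u \in R$, whence $p_1 p_2 = u p_2^2 = u p_2 = p_1$. Symmetrically, $p_2 \in Rp_2 = Rp_1$ gives $p_2 p_1 = p_2$. Now apply the involution: from $p_1 p_2 = p_1$ we get $p_2^* p_1^* = p_1^*$, and since $p_1, p_2$ are self-adjoint this reads $p_2 p_1 = p_1$. Combining with $p_2 p_1 = p_2$ yields $p_1 = p_2$.

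For the case $p_1 R = p_2 R$, the argument is the mirror image: $p_1 = p_2 v$ for some $v$ gives $p_2 p_1 = p_2 p_2 v = p_2 v = p_1$; similarly $p_1 p_2 = p_2$; and applying the involution to $p_2 p_1 = p_1$ gives $p_1 p_2 = p_1$, so again $p_1 = p_2$. (Alternatively one can observe that $p_1 R = p_2 R$ implies, via the involution, $R p_1 = R p_1^* = (p_1 R)^* = (p_2 R)^* = R p_2^* = R p_2$, reducing the second case to the first.)

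There is essentially no obstacle here: the only thing to be careful about is to actually \emph{use} the self-adjointness — without it, two idempotents with the same left (or right) ideal need not coincide — and to make sure the involution is applied on the correct side so that the two derived identities $p_1 p_2 = p_1$ and $p_2 p_1 = p_1$ (or their right-handed analogues) can be matched up. The whole proof is four lines.
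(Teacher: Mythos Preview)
Your proof is correct and is essentially the same as the paper's: from $Rp_1=Rp_2$ one derives $p_1p_2=p_1$ and $p_2p_1=p_2$, then applies the involution to one of these (you star $p_1p_2=p_1$, the paper stars $p_2=p_2p_1$) to match the other, giving $p_1=p_2$. The second case is handled symmetrically in both.
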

\begin{proof}
  If $Rp_1=Rp_2$ then, like in previous lemma, $p_1=p_1p_2$ and $p_2=p_2p_1$. But $p_2=p_2^*=p_1^*p_2^*=p_1p_2=p_1$. Similarly, $p_1R=p_2R$ implies $p_1=p_2$.
\end{proof}

\begin{theorem}\label{idempotenti za grup inverz}
  Let $a\in R$. The following assertions are equivalent:
  \begin{enumerate}[{\rm (i)}]
    \item $a$ is group invertible.
    \item There exists an idempotent $q\in R$ such that $qR=aR$ and $Rq=Ra$.
    \item $a\in R^{(1)}$ and there exists idempotent $q\in R$ such that ${^\circ a}={^\circ q}$ and $a^\circ=q^\circ$.
  \end{enumerate}
  If the previous assertions are valid then the assertions (ii) and (iii) deal with the same unique idempotent $q$. Moreover, $qa^{(1)}q$ is invariant under the choice of $a^{(1)}\in a\{1\}$ and
  \begin{equation}\label{matricne forme za grup inverz}
    a=\bmatrix a & 0 \\ 0 & 0 \endbmatrix_{q\times q}, \quad a^\#=\bmatrix qa^{(1)}q & 0 \\ 0 & 0 \endbmatrix_{q\times q}.
  \end{equation}
\end{theorem}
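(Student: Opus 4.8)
The plan is to prove the chain (i) $\Rightarrow$ (ii) $\Rightarrow$ (iii) $\Rightarrow$ (i), and then separately handle uniqueness of $q$, the invariance of $qa^{(1)}q$, and the matrix forms. For (i) $\Rightarrow$ (ii) I would set $q:=aa^{\#}=a^{\#}a$. A routine check using $aa^{\#}a=a$ and the commutation $aa^{\#}=a^{\#}a$ (both supplied by Theorem~\ref{ekvivalentna def za grup inverz}) shows that $q$ is an idempotent, that $q=aa^{\#}\in aR$ and $q=a^{\#}a\in Ra$ give $qR\subseteq aR$ and $Rq\subseteq Ra$, and that $qa=a=aq$ gives $a\in qR\cap Rq$, hence the reverse inclusions; so $qR=aR$ and $Rq=Ra$.

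For (ii) $\Rightarrow$ (iii) the crucial observation is that $q$ behaves as a two-sided local identity for $a$. From $a\in aR=qR$ write $a=qr$, so $qa=q^{2}r=qr=a$; from $a\in Ra=Rq$ write $a=sq$, so $aq=a$. Since $q\in qR=aR$ and $q\in Rq=Ra$ we may write $q=at=ua$; then $qa=a$ together with $q=at$ yields $ata=a$, so $a\in R^{(1)}$. The annihilator equalities now follow by direct manipulation: $xa=0\Rightarrow xq=xat=0$ and $xq=0\Rightarrow xa=xqa=0$ give ${}^{\circ}a={}^{\circ}q$, while $ax=0\Rightarrow qx=uax=0$ and $qx=0\Rightarrow ax=aqx=0$ give $a^{\circ}=q^{\circ}$.

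For (iii) $\Rightarrow$ (i): since $a$ is regular and $q$ is an idempotent (hence regular), ${}^{\circ}a={}^{\circ}q$ together with Lemma~\ref{podskupovi 1} gives $aR=qR$, and $a^{\circ}=q^{\circ}$ together with Lemma~\ref{podskupovi 2} gives $Ra=Rq$; thus we are back in situation (ii), where, as above, $qa=a=aq$. Now take any $b\in a\{1\}$ and put $x:=qbq$. Using $aq=qa=a$ one gets $axa=(aq)b(qa)=aba=a$, and plainly $xR\subseteq qR=aR$ and $Rx\subseteq Rq=Ra$; by the equivalence of (i) and (iv) in Theorem~\ref{ekvivalentna def za grup inverz}, $x=a^{\#}$. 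In particular $a$ is group invertible and $qa^{(1)}q=a^{\#}$ for \emph{every} $a^{(1)}\in a\{1\}$, which is exactly the asserted invariance. For uniqueness: if $q_{1},q_{2}$ are idempotents satisfying the conditions of (ii), then $Rq_{1}\subseteq Rq_{2}$ and $q_{2}R\subseteq q_{1}R$, so $q_{1}=q_{2}$ by Lemma~\ref{lema jedinstveno q}; and since the idempotent of (iii) was shown to satisfy (ii), it coincides with this $q$. For~(\ref{matricne forme za grup inverz}): from $aq=qa=a$ we get $a(1-q)=(1-q)a=0$, so in the $q\times q$ block decomposition every entry of $a$ vanishes except $qaq=a$; and $a^{\#}=qa^{(1)}q$ obviously satisfies $(1-q)a^{\#}=a^{\#}(1-q)=0$, so it too reduces to its $(1,1)$-entry $qa^{(1)}q$.

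The only point I expect to need a moment's thought is extracting von Neumann regularity inside the implication (ii) $\Rightarrow$ (iii): one must notice that $q\in aR$ forces $q=at$ for some $t$, and then combine this with the local-identity relation $qa=a$ to conclude $ata=a$. Everything else is bookkeeping with one-sided ideals and annihilators, resting on the already-established Theorem~\ref{ekvivalentna def za grup inverz} and Lemmas~\ref{podskupovi 1}, \ref{podskupovi 2} and~\ref{lema jedinstveno q}.
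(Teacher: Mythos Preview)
Your proof is correct and follows essentially the same route as the paper: the cycle (i) $\Rightarrow$ (ii) $\Rightarrow$ (iii) $\Rightarrow$ (i) with $q=aa^{\#}$, the local-identity relations $qa=a=aq$, and $x=qa^{(1)}q$ as the candidate group inverse. The only cosmetic differences are that in (ii) $\Rightarrow$ (iii) you compute the annihilator equalities directly (the paper cites Lemmas~\ref{podskupovi 1} and~\ref{podskupovi 2}), and in (iii) $\Rightarrow$ (i) you pass back to (ii) and invoke Theorem~\ref{ekvivalentna def za grup inverz}(iv) to identify $x=a^{\#}$, whereas the paper stays with the annihilator hypotheses and verifies $ax=xa=q$, $axa=a$, $xax=x$ by hand.
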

\begin{proof}
(i) $\Rightarrow$ (ii):  Suppose that $a$ is group invertible and set $q=aa^\#=a^\#a$. Then $a=qa=aq$ so $qR=aR$, $Rq=Ra$.

(ii) $\Rightarrow$ (iii): From $qR=aR$ we have $q=ax$ and $a=qz$ for some $x,z\in R$. Therefore, $qa=q^2z=qz=a$ and $axa=qa=a$, so $a\in R^{(1)}$. The rest of the proof follows by Lemma \ref{podskupovi 1} (i) and Lemma \ref{podskupovi 2} (i).

(iii) $\Rightarrow$ (i):
   Suppose that $a\in R^{(1)}$ and suppose that there exists an idempotent $q$ such that $a^\circ=q^\circ$ and ${^\circ a}={^\circ q}$. Let $a^{(1)}\in a\{1\}$ be arbitrary. Since $1-a^{(1)}a\in a^\circ \subseteq q^\circ$ we obtain $q=qa^{(1)}a$. Also, $1-q\in q^\circ\subseteq a^\circ$, so $a=aq$. Similarly, $q=aa^{(1)}q$ and $a=qa$. Set $x=qa^{(1)}q$. We have $x=a^\#$, because
   \begin{align*}
   &ax=aqa^{(1)}q=aa^{(1)}q=q, \quad xa=qa^{(1)}qa=qa^{(1)}a=q,\\
   &axa=qa=a, \quad xax=qx=x.
   \end{align*}
   Now the invariance of $qa^{(1)}q$ under the choice of  $a^{(1)}\in a\{1\}$ follows. Note that we have also proved representations (\ref{matricne forme za grup inverz}) since $a=qaq$ and $a^\#=qa^{(1)}q$. The uniqueness of $q$ follows by Lemma \ref{lema jedinstveno q}.
\end{proof}

\begin{theorem}\label{idempotenti za MP inverz}
  Let $a\in R$. The following assertions are equivalent:
  \begin{enumerate}[{\rm (i)}]
    \item $a$ is MP invertible.
    \item There exist self-adjoint idempotents $p,r\in R$ such that $pR=aR$ and $Rr=Ra$.
    \item $a\in R^{(1)}$ and there exist self-adjoint idempotents $p,r\in R$ such that ${^\circ a}={^\circ p}$ and $a^\circ=r^\circ$.
  \end{enumerate}
  If the previous assertions are valid then the assertions (ii) and (iii) deal with the same pair of unique self-adjoint idempotents $p$ and $r$. Moreover, $ra^{(1)}p$ is invariant under the choice of $a^{(1)}\in a\{1\}$ and
  \begin{equation}\label{matricne forme za MP inverz}
    a=\bmatrix a & 0 \\ 0 & 0 \endbmatrix_{p\times r}, \quad a^\dag=\bmatrix ra^{(1)}p & 0 \\ 0 & 0 \endbmatrix_{r\times p}.
  \end{equation}
\end{theorem}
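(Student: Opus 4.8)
The plan is to follow, almost verbatim, the structure of the proof of Theorem~\ref{idempotenti za grup inverz}: run the cycle (i)~$\Rightarrow$~(ii)~$\Rightarrow$~(iii)~$\Rightarrow$~(i), and then settle uniqueness and the matrix forms. The only structural novelty compared with the group inverse is that the right ideal $aR$ and the left ideal $Ra$ are now governed by two (generally distinct) self-adjoint idempotents, $p$ on the ``right'' side and $r$ on the ``left'' side, so the argument splits into two symmetric halves that are glued together only at the final computation of $a^\dag$.

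For (i)~$\Rightarrow$~(ii) I would put $p=aa^\dag$ and $r=a^\dag a$. Penrose equations (1)--(2) make $p$ and $r$ idempotent and (3)--(4) make them self-adjoint, while $pa=aa^\dag a=a$ together with $p=aa^\dag\in aR$ gives $pR=aR$, and symmetrically $ar=a$ with $r=a^\dag a\in Ra$ gives $Rr=Ra$. For (ii)~$\Rightarrow$~(iii): from $pR=aR$ write $a=pz$ and $p=ax$ for some $x,z\in R$; then $pa=p^2z=pz=a$, hence $axa=pa=a$, so $a\in R^{(1)}$; the equalities ${}^\circ a={}^\circ p$ and $a^\circ=r^\circ$ are then immediate from Lemma~\ref{podskupovi 1}(i) and Lemma~\ref{podskupovi 2}(i) applied to the inclusions $pR\subseteq aR$, $aR\subseteq pR$ and $Rr\subseteq Ra$, $Ra\subseteq Rr$.

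The substance is in (iii)~$\Rightarrow$~(i). Fix an arbitrary $a^{(1)}\in a\{1\}$. Since $(1-aa^{(1)})a=0$ we have $1-aa^{(1)}\in{}^\circ a={}^\circ p$, so $p=aa^{(1)}p$; since $(1-p)p=0$ we have $1-p\in{}^\circ p={}^\circ a$, so $a=pa$. Symmetrically, from $a^\circ=r^\circ$ one gets $r=ra^{(1)}a$ and $a=ar$. Now set $x=ra^{(1)}p$. The key intermediate identities are $ax=ara^{(1)}p=aa^{(1)}p=p$ and $xa=ra^{(1)}pa=ra^{(1)}a=r$; with these, $axa=pa=a$, $xax=xp=x$, and $(ax)^*=p^*=p=ax$, $(xa)^*=r^*=r=xa$, so $x$ satisfies all four Penrose equations and $x=a^\dag$. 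In particular $a^\dag=ra^{(1)}p$, which, $a^\dag$ being unique, proves the invariance of $ra^{(1)}p$ over $a^{(1)}\in a\{1\}$. The one point requiring care is that $a^{(1)}$ is only an inner inverse, not a reflexive one, so the computation must use exactly the four relations $a=pa$, $p=aa^{(1)}p$, $a=ar$, $r=ra^{(1)}a$ and nothing stronger; as the verification shows, these do suffice, and I do not expect a genuine obstacle beyond this watchpoint.

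Finally, the matrix forms follow from $a=pa=ar$, whence $a=par$ and $(1-p)a=a(1-r)=0$, which kills every block of $a$ in the $p\times r$ decomposition except the $(1,1)$ entry $par=a$; likewise $ra^{(1)}p=x=rx=xp$ gives $(1-r)x=x(1-p)=0$, yielding the stated form of $a^\dag$ in the $r\times p$ decomposition. For uniqueness and the claim that (ii) and (iii) concern the same pair: starting from (iii), since $a\in R^{(1)}$ and $p,r$ are idempotent, hence regular, Lemma~\ref{podskupovi 1} and Lemma~\ref{podskupovi 2} upgrade ${}^\circ a={}^\circ p$ and $a^\circ=r^\circ$ to $pR=aR$ and $Rr=Ra$; then Lemma~\ref{lema jedinstveni p i r} shows that any self-adjoint idempotents with these ideal equalities are uniquely determined by $a$. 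Thus the entire proof is the bookkeeping of the group-inverse argument carried out on the two sides independently.
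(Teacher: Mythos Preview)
Your proof is correct and follows essentially the same approach as the paper: the same choices $p=aa^\dag$, $r=a^\dag a$ for (i)$\Rightarrow$(ii), the same derivation of $a\in R^{(1)}$ for (ii)$\Rightarrow$(iii), and the same candidate $x=ra^{(1)}p$ with the verifications $ax=p$, $xa=r$ for (iii)$\Rightarrow$(i). Your treatment of uniqueness is slightly more explicit than the paper's (you spell out how Lemmas~\ref{podskupovi 1} and~\ref{podskupovi 2} upgrade the annihilator conditions of (iii) back to the ideal equalities of (ii) before invoking Lemma~\ref{lema jedinstveni p i r}), but the argument is the same.
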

\begin{proof}
(i) $\Rightarrow$ (ii):
  Suppose that $a$ is MP invertible and set $p=aa^\dag$ and $r=a^\dag a$. It is clear that $p$ and $r$ are self-adjoint idempotents. Since $a=pa=ar$ we conclude that $pR=aR$ and $Rr=Ra$.

(ii) $\Rightarrow$ (iii): If we use $p$ instead of $q$ then the proof proceeds along the same lines as the proof of Theorem \ref{idempotenti za grup inverz} (ii) $\Rightarrow$ (iii).

(iii) $\Rightarrow$ (i): As in the proof of Theorem \ref{idempotenti za grup inverz} we can show that $a=pa=ar$, $p=aa^{(1)}p$ and $r=ra^{(1)}a$. Set $x=ra^{(1)}p$. We have $x=a^\dag$ because
\begin{align*}
  & ax=ara^{(1)}p=aa^{(1)}p=p=p^* \\
  & xa=ra^{(1)}pa=ra^{(1)}a=r=r^* \\
  & axa=pa=a \\
  & xax=rx=x.
\end{align*}
Now the invariance of $ra^{(1)}p$ under the choice of  $a^{(1)}\in a\{1\}$ follows because it is known that MP inverse is unique when it exists. Note that we have also proved representations (\ref{matricne forme za MP inverz}) since $a=par$ and $a^\dag =x=ra^{(1)}p$. The uniqueness of $p$ and $r$ follows by Lemma \ref{lema jedinstveni p i r}.
\end{proof}

Recall that a $*$-ring $R$ is Rickart $*$-ring if for every $a\in R$ there exists self-adjoint idempotent $p$ such that $^\circ a=Rp$, \cite{Be}. The analogous property for right annihilators is automatically fulfilled in this case. Note that $Rp={^\circ (1-p)}$.

\begin{corollary}
  Let $a\in R$ where $R$ is Rickart $*$-ring. Then $a$ is MP invertible if and only if $a$ is regular.
\end{corollary}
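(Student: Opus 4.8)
The plan is to deduce the statement from Theorem~\ref{idempotenti za MP inverz}. One implication is immediate: if $a$ is MP invertible, then $aa^\dag a=a$ already exhibits $a$ as regular, so $a\in R^{(1)}$. The content of the corollary is therefore the converse, and this is where the Rickart $*$-ring hypothesis enters.

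Suppose then that $a\in R^{(1)}$. By Theorem~\ref{idempotenti za MP inverz} it suffices to construct self-adjoint idempotents $p,r\in R$ with ${}^\circ a={}^\circ p$ and $a^\circ=r^\circ$; regularity supplies the remaining hypothesis of part~(iii), and then (iii) $\Rightarrow$ (i) gives MP invertibility of $a$. Since $R$ is a Rickart $*$-ring there is a self-adjoint idempotent $p_0$ with ${}^\circ a=Rp_0$. Using the identity $Rp_0={}^\circ(1-p_0)$ recorded just before the corollary, I would set $p:=1-p_0$, which is again a self-adjoint idempotent and satisfies ${}^\circ p={}^\circ(1-p_0)=Rp_0={}^\circ a$.

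For the right annihilator $a^\circ$ I would invoke the remark that in a Rickart $*$-ring the analogous property for right annihilators holds automatically; concretely, applying the involution to a relation of the form ${}^\circ(a^*)=Rf$ produces a self-adjoint idempotent $f$ with $a^\circ=fR$. Dually to the previous step, $fR=(1-f)^\circ$, so putting $r:=1-f$ yields a self-adjoint idempotent with $r^\circ=(1-f)^\circ=fR=a^\circ$. With $a$ regular and $p,r$ as just constructed, Theorem~\ref{idempotenti za MP inverz}, part (iii) $\Rightarrow$ (i), finishes the proof.

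The argument is short and essentially all of it is citation; the one place where care is needed --- and the only plausible source of error --- is the bookkeeping that converts ``the annihilator is the one-sided ideal generated by an idempotent $e$'' into ``the annihilator is the annihilator of the complementary idempotent $1-e$'', so that the projections actually fed into Theorem~\ref{idempotenti za MP inverz} are $1-p_0$ and $1-f$ rather than $p_0$ and $f$ themselves.
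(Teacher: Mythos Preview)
Your proof is correct and is exactly the argument the paper intends: the corollary is stated without proof immediately after Theorem~\ref{idempotenti za MP inverz} and the sentence recalling that in a Rickart $*$-ring ${^\circ a}=Rp_0={^\circ(1-p_0)}$ (with the right-annihilator version automatic), so the reader is meant to feed $p=1-p_0$ and $r=1-f$ into condition~(iii) just as you do. Your bookkeeping with the complementary idempotents is the only thing to check, and you have it right.
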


The analogous characterizations of core and dual core inverses using idempotents and annihilators are given in next two theorems. Furthermore, we characterize these inverses by the set of equations.
\begin{theorem}\label{idempotenti za core inverz}
  Let $a\in R$. The following assertions are equivalent:
  \begin{enumerate}[{\rm (i)}]
    \item $a$ is core invertible.
    \item There exists $x\in R$ such that $axa=a$, ${^\circ x}={^\circ a}$ and $x^\circ=(a^*)^\circ$.
    \item There exist $x\in R$ such that $$(1)\; axa=a \quad (2)\; xax=x \quad (3)\; (ax)^*=ax \quad (6)\; xa^2=a \quad (7)\; ax^2=x.$$
    \item There exist self-adjoint idempotent $p\in R$ and idempotent $q\in R$ such that $pR=aR$, $qR=aR$ and $Rq=Ra$.
    \item $a\in R^{(1)}$ and there exist self-adjoint idempotent $p$ and idempotent $q\in R$ such that ${^\circ a}={^\circ p}$, ${^\circ a}={^\circ q}$ and $a^\circ=q^\circ$.
  \end{enumerate}
  If the previous assertions are valid then $x=a^\core$, $a^\core$ is unique and the assertions (iv) and (v) deal with the same pair of unique idempotents $p$ and $q$. Moreover, $qa^{(1)}p$ is invariant under the choice of $a^{(1)}\in a\{1\}$ and
  \begin{equation}\label{matricne forme za core inverz}
    a=\bmatrix a & 0 \\ 0 & 0 \endbmatrix_{p\times q}, \quad a^\core=\bmatrix qa^{(1)}p & 0 \\ 0 & 0 \endbmatrix_{q\times p}.
  \end{equation}
\end{theorem}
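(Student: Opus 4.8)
The plan is to prove the two implication cycles (i)$\,\Rightarrow\,$(ii)$\,\Rightarrow\,$(iii)$\,\Rightarrow\,$(i) and (i)$\,\Rightarrow\,$(iv)$\,\Rightarrow\,$(v)$\,\Rightarrow\,$(i), and then to read off the uniqueness of $a^\core$, the invariance of $qa^{(1)}p$ and the matrix forms (\ref{matricne forme za core inverz}) from the elements produced along the way; the argument runs parallel to the proofs of Theorems \ref{idempotenti za grup inverz} and \ref{idempotenti za MP inverz}. For the first cycle, (i)$\,\Rightarrow\,$(ii) is immediate: $x=a^\core$ gives $axa=a$ by Definition \ref{definicija core inverse}, while $xR=aR$ and $Rx=Ra^*$ turn into ${}^\circ x={}^\circ a$ and $x^\circ=(a^*)^\circ$ by applying Lemma \ref{podskupovi 1}(i) and Lemma \ref{podskupovi 2}(i) to both inclusions. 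For (ii)$\,\Rightarrow\,$(iii): from $1-ax\in{}^\circ a={}^\circ x$ one gets $x=ax^2$, which is $(7)$; taking adjoints in $axa=a$ gives $1-x^*a^*\in(a^*)^\circ=x^\circ$, hence $x=xx^*a^*$, so $ax=(ax)(ax)^*$ is self-adjoint, which is $(3)$, and $x=x(ax)^*=x(ax)=xax$, which is $(2)$; now $x$ is regular, so Lemma \ref{podskupovi 1}(ii) applied to ${}^\circ x\subseteq{}^\circ a$ yields $a\in xR$, say $a=xb$, whence $xa^2=x(ax)b=(xax)b=xb=a$, which is $(6)$. For (iii)$\,\Rightarrow\,$(i): $(7)$ and $(6)$ give $xR\subseteq aR$ and $aR\subseteq xR$; $(2)$ and $(3)$ give $x=x(ax)=x(x^*a^*)\in Ra^*$; the adjoint of $(1)$ with $(3)$ gives $a^*=a^*x^*a^*=a^*(ax)=(a^*a)x\in Rx$; so $x$ satisfies Definition \ref{definicija core inverse} and $x=a^\core$.

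For the second cycle, (i)$\,\Rightarrow\,$(iv) uses the equations just established for $x=a^\core$: put $p=ax$ and $q=xa$; then $p$ is a self-adjoint idempotent by $(3)$ and $(ax)^2=a(xax)=ax$, while $q$ is an idempotent by $(xa)^2=x(axa)=xa$; the equalities $pR=aR$ and $Rq=Ra$ follow from $a=axa$, and $qR=aR$ follows from $xR=aR$ together with $a=xa^2$. For (iv)$\,\Rightarrow\,$(v): writing $q=as$ and $a=qt$ from $qR=aR$ gives $asa=qa=a$, so $a$ is regular, and then Lemma \ref{podskupovi 1}(i) and Lemma \ref{podskupovi 2}(i) convert the ideal equalities into the annihilator equalities of (v). For (v)$\,\Rightarrow\,$(i): for an arbitrary $a^{(1)}\in a\{1\}$ the annihilator conditions give, exactly as in the proof of Theorem \ref{idempotenti za grup inverz}, the relations $a=pa=qa=aq$, $p=aa^{(1)}p$, $q=aa^{(1)}q=qa^{(1)}a$; since $p,q$ are idempotents with $pR=aR=qR$ we also get $pq=q$ and $qp=p$; then $x=qa^{(1)}p$ satisfies $ax=p$, $xa=q$, hence $(1),(2),(3),(6),(7)$, so $x=a^\core$ by the first cycle.

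The remaining assertions follow routinely. For uniqueness of $a^\core$: if $x$ and $y$ are core inverses then $ax=ay$ (the unique self-adjoint idempotent with right ideal $aR$, by Lemma \ref{lema jedinstveni p i r}) and $xa=ya$ (the unique idempotent with right ideal $aR$ and left ideal $Ra$, by Lemma \ref{lema jedinstveno q}), whence $x=xax=x(ay)=(xa)y=(ya)y=yay=y$. Uniqueness of $p$ and $q$ in (iv) and (v) is Lemma \ref{lema jedinstveni p i r} and Lemma \ref{lema jedinstveno q}, and the invariance of $qa^{(1)}p$ follows since it equals $a^\core$ for every $a^{(1)}$. Finally $a=pa=aq$ yields $(1-p)a=0$, $a(1-q)=0$ and $paq=a$, while $x=qa^{(1)}p$ yields $(1-q)x=0$, $x(1-p)=0$ and $qxp=x$, which are precisely the block decompositions in (\ref{matricne forme za core inverz}).

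I expect the main obstacle to be the step (ii)$\,\Rightarrow\,$(iii): one must extract the self-adjointness of $ax$ from the bare annihilator relation $x^\circ=(a^*)^\circ$, via the chain $a^*x^*a^*=a^*\Rightarrow x=xx^*a^*\Rightarrow ax=(ax)(ax)^*$, and one must first establish that $x$ is regular before invoking Lemma \ref{podskupovi 1}(ii) to obtain $(6)$. Once this interplay between the left-ideal condition and self-adjointness is in place, every other implication is systematic and mirrors the arguments already given for the group and Moore--Penrose inverse.
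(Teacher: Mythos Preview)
Your proof is correct and follows essentially the same approach as the paper (same idempotents $p=ax$, $q=xa$, same formula $a^\core=qa^{(1)}p$, same lemmas), differing only in organization: you run two cycles (i)$\Rightarrow$(ii)$\Rightarrow$(iii)$\Rightarrow$(i) and (i)$\Rightarrow$(iv)$\Rightarrow$(v)$\Rightarrow$(i), whereas the paper proves the single chain (i)$\Rightarrow$(ii)$\Rightarrow$(iii)$\Rightarrow$(iv)$\Rightarrow$(v)$\Rightarrow$(i).

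One small remark on the step you flagged as the main obstacle: your derivation of $(6)$ in (ii)$\Rightarrow$(iii) via regularity of $x$ and Lemma~\ref{podskupovi 1}(ii) is correct but more elaborate than necessary. The paper simply observes that once $xax=x$ is known, one has $xa-1\in{}^\circ x={}^\circ a$, hence $(xa-1)a=0$, i.e.\ $xa^2=a$; no appeal to regularity or to $a\in xR$ is needed.
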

\begin{proof}
  (i) $\Rightarrow$ (ii):
  Suppose that $a$ is core invertible and let $x=a^\core$. By definition, $axa=a$, $xR=aR$ and $Rx=Ra^*$. By lemmas \ref{podskupovi 1} and \ref{podskupovi 2}, it follows that ${^\circ x}={^\circ a}$ and $x^\circ=(a^*)^\circ$.

  (ii) $\Rightarrow$ (iii)
  Suppose that there exists $x\in R$ such that $axa=a$, ${^\circ x}={^\circ a}$ and $x^\circ=(a^*)^\circ$. We can follow the proofs of theorems \ref{ekvivalentna def za grup inverz} and \ref{ekvivalentna def za MP inverz} to obtain that
  \begin{equation*}
    x=ax^2,\quad ax=(ax)^* \quad \text{and} \quad xax=x.
  \end{equation*}
  From $(xa-1)x\in {^\circ x}\subseteq{^\circ a}$ we have
  \begin{equation*}
    a=xa^2.
  \end{equation*}

  (iii) $\Rightarrow$ (iv): Set $p=ax$ and $q=xa$. From $axa=a$ it follows that $p$ and $q$ are idempotents such that $pR=aR$ and $Rq=Ra$. Equation (3) shows that $p$ is self-adjoint. From $a=xa^2=qa$ and $q=xa=ax^2a$ we conclude that $qR=aR$.

  (iv) $\Rightarrow$ (v): The proof is similar to the proof of Theorem \ref{idempotenti za grup inverz} (ii) $\Rightarrow$ (iii).

  (v) $\Rightarrow$ (i):
  Suppose that $a\in R^{(1)}$ and suppose that there exist self-adjoint idempotent $p\in R$ and idempotent $q\in R$ such that ${^\circ a}={^\circ p}$, ${^\circ a}={^\circ q}$ and $a^\circ=q^\circ$. Fix $a^{(1)}\in a\{1\}$. In the proof of Theorem \ref{idempotenti za grup inverz} we showed that $a=qa=aq$ and $q=qa^{(1)}a=aa^{(1)}q$. In the proof of Theorem \ref{idempotenti za MP inverz} we showed that $a=pa$ and $p=aa^{(1)}p$. Let $a^-\in a\{1\}$ be arbitrary. Then $qa^-p=qa^{(1)}aa^-aa^{(1)}p=qa^{(1)}aa^{(1)}p=qa^{(1)}p$, so $qa^-p$ is invariant under the choice of $a^-\in a\{1\}$. Set $x=qa^{(1)}p$. We have $axa=aqa^{(1)}pa=aa^{(1)}a=a$. Also, $x=qa^{(1)}p=aa^{(1)}qa^{(1)}p$ and $xa^2=qa^{(1)}pa^2=qa^{(1)}aa=qa=a$, so $xR=aR$. Moreover,
  \begin{align*}
    &x=qa^{(1)}p^*=qa^{(1)}(aa^{(1)}p)^*=qa^{(1)}p(a^{(1)})^*a^* \quad \text{and}\\
    &a^*ax=a^*aqa^{(1)}p=a^*aa^{(1)}p=a^*p=(pa)^*=a^*,
  \end{align*}
  so $Rx=Ra^*$. It follows that $x=a^\core$, i.e. $a$ is core invertible.

  The uniqueness of $p$ and $q$ follows by lemmas \ref{lema jedinstveni p i r} and \ref{lema jedinstveno q}.
  If $x$ is core inverse of $a$ then we showed that $x$ has properties given in (ii) and (iii). Suppose that there exist two elements $x$ and $y$ satisfying equations in (iii). By the proof of (iii) $\Rightarrow$ (iv) and the uniqueness of $p$ and $q$ we conclude that $p=ax=ay$ and $q=xa=ya$.
  Therefore, $x=xax=yay=y$. We also proved that if exists some $x$ satisfying equations in (iii) then $a$ is core invertible but its core inverse must satisfies equations in (iii) which uniquely determine $x$. It follows that $x$ appearing in (ii) and $x$ appearing in (iii) are both equal to $a^\core$ and that core inverse of $a$ is unique.
  Representations (\ref{matricne forme za core inverz}) follows by $a=paq$ and $a^\core =x=qa^{(1)}p$.
\end{proof}

The theorem concerning the dual core inverse can be proved similarly.
\begin{theorem}\label{idempotenti za dual core inverz}
  Let $a\in R$. The following assertions are equivalent:
  \begin{enumerate}[{\rm (i)}]
    \item $a$ is dual core invertible.
    \item There exists $x\in R$ such that $axa=a$, ${^\circ x}={^\circ (a^*)}$ and $x^\circ=a^\circ$.
    \item There exists $x\in R$ such that $$(1)\; axa=a \quad (2)\; xax=x \quad (4)\; (xa)^*=xa \quad (8)\; a^2x=a \quad (9)\; x^2a=x.$$
    \item There exist self-adjoint idempotent $r\in R$ and idempotent $q\in R$ such that $Rr=Ra$, $qR=aR$ and $Rq=Ra$.
    \item $a\in R^{(1)}$ and there exist self-adjoint idempotent $r$ and idempotent $q\in R$ such that $a^\circ=r^\circ$, ${^\circ a}={^\circ q}$ and $a^\circ=q^\circ$.
  \end{enumerate}
  If the previous assertions are valid then $x=a_\core$, $a_\core$ is unique and the assertions (iv) and (v) deal with the same pair of unique idempotents $r$ and $q$. Moreover, $ra^{(1)}q$ is invariant under the choice of $a^{(1)}\in a\{1\}$ and
  \begin{equation*}
    a=\bmatrix a & 0 \\ 0 & 0 \endbmatrix_{q\times r}, \quad a_\core=\bmatrix ra^{(1)}q & 0 \\ 0 & 0 \endbmatrix_{r\times q}.
  \end{equation*}
\end{theorem}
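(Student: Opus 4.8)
The plan is to prove the theorem by mirroring, step for step, the proof of Theorem~\ref{idempotenti za core inverz}, exploiting the fact that the dual core inverse is obtained from the core inverse by ``transposing'' through the involution. The starting remark is that $x$ is a dual core inverse of $a$ precisely when $x^{*}$ is a core inverse of $a^{*}$: applying $*$ to the defining relations $axa=a$, $xR=a^{*}R$, $Rx=Ra$ and using $(yR)^{*}=Ry^{*}$, $(Ry)^{*}=y^{*}R$ turns them into $a^{*}x^{*}a^{*}=a^{*}$, $x^{*}R=a^{*}R$, $Rx^{*}=R(a^{*})^{*}$. Hence $a\in R_{\core}\iff a^{*}\in R^{\core}$ with $a_{\core}=\bigl((a^{*})^{\core}\bigr)^{*}$, and in principle the whole statement could be read off Theorem~\ref{idempotenti za core inverz} applied to $a^{*}$ together with $a^{*}\in R^{(1)}\iff a\in R^{(1)}$, ${}^{\circ}(a^{*})=(a^{\circ})^{*}$ and $(a^{*})^{\circ}=({}^{\circ}a)^{*}$, the self-adjoint idempotent of that theorem playing the role of $r$ and its idempotent the role of $q$. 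I would nevertheless give the direct argument, which is scarcely longer than this translation.

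For the direct proof I would run the cycle (i)$\Rightarrow$(ii)$\Rightarrow$(iii)$\Rightarrow$(iv)$\Rightarrow$(v)$\Rightarrow$(i). Step (i)$\Rightarrow$(ii) feeds the defining relations of $a_{\core}$ into Lemmas~\ref{podskupovi 1} and~\ref{podskupovi 2}. Step (ii)$\Rightarrow$(iii) copies the arguments from the proofs of Theorems~\ref{ekvivalentna def za grup inverz} and~\ref{ekvivalentna def za MP inverz}: the annihilator hypotheses give $xax=x$, $(xa)^{*}=xa$ and $x=x^{2}a$, and $x(xa-1)\in x^{\circ}=a^{\circ}$ gives $a=a^{2}x$. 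Step (iii)$\Rightarrow$(iv): put $r=xa$ and $q=ax$; equation~(1) makes both idempotent and yields $Rr=Ra$ and $qR=aR$ for free, equation~(4) makes $r$ self-adjoint, and equations~(8),(9) give $Rq=Ra$, via $a=a^{2}x=a(ax)=aq\in Rq$ and $q=ax=(ax^{2})a\in Ra$. Step (iv)$\Rightarrow$(v) proceeds along the same lines as Theorem~\ref{idempotenti za grup inverz} (ii)$\Rightarrow$(iii), turning the ideal equalities into annihilator equalities and noting $a\in R^{(1)}$.

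The step carrying the content is (v)$\Rightarrow$(i). Fixing $a^{(1)}\in a\{1\}$ and reusing the computations already made in the proofs of Theorems~\ref{idempotenti za grup inverz} and~\ref{idempotenti za MP inverz} (for the idempotent $q$, and for $r$ with the hypothesis $a^{\circ}=r^{\circ}$), one gets $a=qa=aq=ar$, $q=qa^{(1)}a=aa^{(1)}q$ and $r=ra^{(1)}a$. One then sets $x=ra^{(1)}q$ and notes $xa=ra^{(1)}qa=ra^{(1)}a=r$; with this the checks $axa=a$, $a=a^{2}x$ and $x\in Ra$ (hence $Rx=Ra$) are immediate. The one point needing the self-adjointness of $r$ is $xR=a^{*}R$: here $x=a^{*}(a^{(1)})^{*}ra^{(1)}q\in a^{*}R$ since $r=r^{*}=a^{*}(a^{(1)})^{*}r$, while $a^{*}=(ar)^{*}=ra^{*}=(xa)a^{*}=x(aa^{*})\in xR$. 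Thus $x=a_{\core}$. Uniqueness of $r$ and $q$ follows from Lemmas~\ref{lema jedinstveni p i r} and~\ref{lema jedinstveno q}; if $x$ and $y$ both satisfy the equations in~(iii), then by (iii)$\Rightarrow$(iv) and this uniqueness $r=xa=ya$ and $q=ax=ay$, whence $x=xax=x(ay)=(xa)y=(ya)y=yay=y$, which also identifies the $x$ in (ii) and (iii) as $a_{\core}$ and shows it is unique; the matrix forms come from $a=qar$ and $a_{\core}=x=ra^{(1)}q$. I do not foresee a real obstacle: the argument is entirely parallel to Theorem~\ref{idempotenti za core inverz}, and the only thing to watch is the left/right bookkeeping — which ideal inclusions are free from $axa=a$ and which genuinely use equations~(8) and~(9), the self-adjoint idempotent being $xa$ rather than $ax$, and applying the self-adjointness of $r$ on the correct side in (v)$\Rightarrow$(i).
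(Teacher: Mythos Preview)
Your proposal is correct and matches the paper's approach exactly: the paper simply says the dual core theorem ``can be proved similarly'' to Theorem~\ref{idempotenti za core inverz}, and your direct argument is precisely that mirror, with the bonus observation that $a_\core=((a^*)^\core)^*$ gives an alternative one-line reduction. One small slip to fix in (ii)$\Rightarrow$(iii): the relevant step is $(ax-1)\in x^\circ=a^\circ$ (from $xax=x$, i.e.\ $x(ax-1)=0$), not ``$x(xa-1)\in x^\circ$''; your conclusion $a=a^2x$ is the right one, so this is just a typo in the bookkeeping you yourself flagged as the thing to watch.
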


We will use $q_a$, $p_a$ and $r_a$ for the idempotents associated with $a\in R$, given in theorems \ref{idempotenti za grup inverz} -- \ref{idempotenti za dual core inverz}. We will write $q$, $p$ and $r$ instead of $q_a$, $p_a$ and $r_a$ when no confusion can arise.

Let us look at the equations in theorems \ref{idempotenti za core inverz} (iii) and \ref{idempotenti za dual core inverz} (iii) that characterize core and dual core inverse respectively. Note that these equations are combinations of equations that characterize group inverse and equations that characterize MP inverse. To see that it is enough to check that the following sets of equations are equivalent:
\begin{enumerate}[{\rm (i)}]
\item $axa=a$, $xax=x$, $ax=xa$;
\item $axa=a$, $xax=x$, $xa^2=a$, $a^2x=a$;
\item $axa=a$, $xax=x$, $x^2a=x$, $ax^2=x$.
\end{enumerate}
It is clear that (i) implies (ii) and (iii). If $axa=a$, $xax=x$, $xa^2=a$, $a^2x=a$ then $ax=xa^2x=xa$, so (ii) implies (i). Similarly, (iii) implies (i).
\begin{remark}
From theorems \ref{idempotenti za grup inverz} -- \ref{idempotenti za dual core inverz} it follows that $a$ is both group and MP invertible if and only if $a$ is both core and dual core invertible. If $a$ is core or dual core invertible then $a$ is group invertible. In other words, $R^\#\cap R^\dag =R^\core \cap R_\core$ and $R^\core \cup R_\core\subseteq R^\#$. But, core invertibility or dual core invertibility of $a$ does not imply MP invertibility of $a$.
\end{remark}

\begin{remark}
  The statements (ii) and (iii) in Theorem \ref{idempotenti za core inverz} and the statements (ii) and (iii) of Theorem \ref{idempotenti za dual core inverz} can be used as equivalent definitions of core inverse and dual core inverse, respectively.
\end{remark}

Suppose that $a\in R^\#\cap R^\dag$. By theorems \ref{idempotenti za grup inverz} and \ref{idempotenti za MP inverz}, it follows that there exist unique idempotent $q=q_a$ and unique self-adjoint idempotents $p=p_a$ and $r=r_a$ with given properties. By the uniqueness, we conclude that these idempotents are the same as idempotents in theorems \ref{idempotenti za core inverz} and \ref{idempotenti za dual core inverz}. Therefore,
\begin{equation}\label{p,q,r}
\begin{aligned}
  &q=aa^\#=a^\#a=a^\core a=aa_\core\\
  & p=aa^\dag =aa^\core \\
  & r=a^\dag a=a_\core a.
\end{aligned}
\end{equation}
Now, it is easy to show that
\begin{equation}\label{pq=q}
  pq=q, \quad qp=p,\quad rq=r, \quad qr=q.
\end{equation}
Moreover,
\begin{equation}\label{pq*=p}
  q^*p=(pq)^*=q^*, \quad pq^*=(qp)^*=p, \quad q^*r=(rq)^*=r, \quad rq^*=(qr)^*=q^*.
\end{equation}
We also proved in theorems \ref{idempotenti za grup inverz} -- \ref{idempotenti za dual core inverz} that
\begin{equation}\label{inverzi na jednom mestu}
  a=qaq=paq=qar=par, \quad a^\#=qa^{(1)}q, \quad a^\dag=ra^{(1)}p, \quad a^\core =qa^{(1)}p, \quad a_\core =ra^{(1)}q,
\end{equation}
where $a^{(1)}\in a\{1\}$ is arbitrary.
By (\ref{pq=q}) -- (\ref{inverzi na jednom mestu}), it follows that
\begin{equation}\label{matricne forme prosirene}
\begin{aligned}
  a&=\bmatrix a&0 \\ 0&0 \endbmatrix_{q\times q} =\bmatrix a&0 \\ 0&0 \endbmatrix_{p\times q}=\bmatrix a&0 \\ 0&0 \endbmatrix_{q\times r}=\bmatrix a&0 \\ 0&0 \endbmatrix_{p\times r} \\
  a^\# &=\bmatrix a^\# &0 \\ 0&0 \endbmatrix_{q\times q}=\bmatrix a^\#&0 \\ 0&0 \endbmatrix_{p\times q}=\bmatrix a^\#&0 \\ 0&0 \endbmatrix_{q\times r}=\bmatrix a^\#&0 \\ 0&0 \endbmatrix_{p\times r} \\
  a^\dag &=\bmatrix a^\dag &0 \\ 0&0 \endbmatrix_{r\times p}=\bmatrix a^\dag &0 \\ 0&0 \endbmatrix_{q^*\times p}=\bmatrix a^\dag &0 \\ 0&0 \endbmatrix_{r\times q^*}=\bmatrix a^\dag &0 \\ 0&0 \endbmatrix_{q^*\times q^*} \\
  a^\core &=\bmatrix a^\core &0 \\ 0&0 \endbmatrix_{q\times p}=\bmatrix a^\core &0 \\ 0&0 \endbmatrix_{p\times p}=\bmatrix a^\core &0 \\ 0&0 \endbmatrix_{q\times q^*}=\bmatrix a^\core &0 \\ 0&0 \endbmatrix_{p\times q^*}\\
  a_\core &=\bmatrix a_\core &0 \\ 0&0 \endbmatrix_{r\times q}=\bmatrix a_\core &0 \\ 0&0 \endbmatrix_{q^*\times q}=\bmatrix a_\core &0 \\ 0&0 \endbmatrix_{r\times r}=\bmatrix a_\core &0 \\ 0&0 \endbmatrix_{q^*\times r}.
\end{aligned}
\end{equation}
The elements in upper left corners in (\ref{matricne forme prosirene}) belong to the sets of the forms $p_1Rp_2$, where $p_1$ and $p_2$ are idempotents. When $p_1\neq p_2$ we can not consider the invertibility of the corner element in $p_1Rp_2$, but it has some similar property. Let us look, for example, the representation $a^\core= \bmatrix a^\core &0 \\ 0&0 \endbmatrix_{q\times q^*}\in qRq^*$. There exists unique element $x\in q^*Rq$ such that $xa^\core=q^*$ and $a^\core x=q$. Namely, $x=q^*aq$. The analogous property can be shown for all corner elements in (\ref{matricne forme prosirene}). The proof is left to the reader. We will back to this property in Section 4 when we will consider the case of Hilbert space operators.

It is clear that $(a^\#)^\#=a$ and $(a^\dag)^\dag=a$. The expressions for $(A^\core)^\dag$ and $(A^\core)^\core$, where $A\in M_n$, are given in \cite{BT}. We give expressions for other "double" inverses.
\begin{theorem}\label{dupli inverzi}
  Let $a\in R^\#\cap R^\dag$. Then:
  \begin{enumerate}[{\rm (i)}]
    \item $p_{a^\#}=p_a$,\quad $q_{a^\#}=q_a$,\quad $r_{a^\#}=r_a$ and \begin{flalign*}(a^\#)^\#=a, \quad (a^\#)^\dag=r_aap_a, \quad (a^\#)^\core=ap_a, \quad (a^\#)_\core=r_aa.&&\end{flalign*}
    \item $p_{a^\dag}=r_a$,\quad $q_{a^\dag}=q_a^*$,\quad $r_{a^\dag}=p_a$ and \begin{flalign*}(a^\dag)^\#=q^*_aaq^*_a, \quad (a^\dag)^\dag=a, \quad (a^\dag)^\core=q^*_aa, \quad (a^\dag)_\core=aq^*_a.&&\end{flalign*}
    \item $p_{a^\core}=q_{a^\core}=r_{a^\core}=p_a$ and \begin{flalign*}(a^\core)^\#=(a^\core)^\dag=(a^\core)^\core=(a^\core)_\core=ap_a.&&\end{flalign*}
    \item $p_{a_\core}=q_{a_\core}=r_{a_\core}=r_a$ and
      \begin{flalign*}(a_\core)^\#=(a_\core)^\dag=(a_\core)^\core=(a_\core)_\core=r_aa.&&\end{flalign*}
  \end{enumerate}
\end{theorem}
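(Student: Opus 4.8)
The plan is to dispose of all four parts uniformly, in two stages: first pin down the idempotents $p_b,q_b,r_b$ attached to each $b\in\{a^\#,a^\dag,a^\core,a_\core\}$, and then read off the twelve ``double'' inverses from the universal formulas in (\ref{inverzi na jednom mestu}) using one cleverly chosen inner inverse.

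\emph{Stage 1: identifying the idempotents.} For $b=a^\#$ the group inverse is involutory, so $(a^\#)^\#=a$ and hence $q_{a^\#}=a^\#(a^\#)^\#=a^\#a=q_a$; since $a^\#R=aR=p_aR$ and $Ra^\#=Ra=Rr_a$ by Theorem \ref{ekvivalentna def za grup inverz}, the self-adjoint idempotents $p_a,r_a$ serve as the pair required in Theorem \ref{idempotenti za MP inverz}(ii), whence $a^\#\in R^\dag$ with $p_{a^\#}=p_a$, $r_{a^\#}=r_a$ (uniqueness by Lemma \ref{lema jedinstveni p i r}). Dually, for $b=a^\dag$ the MP inverse is involutory, so $(a^\dag)^\dag=a$, $p_{a^\dag}=a^\dag a=r_a$ and $r_{a^\dag}=aa^\dag=p_a$; moreover $a^\dag R=a^*R$, $Ra^\dag=Ra^*$ by Theorem \ref{ekvivalentna def za MP inverz}, and $a^*$ is group invertible with $(a^*)^\#=(a^\#)^*$ (take adjoints in the defining equations of $a^\#$), whose associated idempotent is $a^*(a^*)^\#=(a^\#a)^*=q_a^*$; since $a^\dag$ generates the same one-sided ideals as $a^*$, it follows that $a^\dag\in R^\#$ with $q_{a^\dag}=q_a^*$. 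For $b=a^\core$ one has $a^\core R=aR$ and $Ra^\core=Ra^*$ by Definition \ref{definicija core inverse}, and the single self-adjoint idempotent $p_a=aa^\dag$ satisfies $p_aR=aR$ and $Rp_a=Ra^\dag=Ra^*$; hence $p_a$ works simultaneously as the idempotent $q$ of Theorem \ref{idempotenti za grup inverz}(ii) and as both idempotents $p,r$ of Theorem \ref{idempotenti za MP inverz}(ii), so $a^\core\in R^\#\cap R^\dag$ and $p_{a^\core}=q_{a^\core}=r_{a^\core}=p_a$. The case $b=a_\core$ is the mirror image, with $r_a=a^\dag a$ playing the role $p_a$ played above, giving $p_{a_\core}=q_{a_\core}=r_{a_\core}=r_a$. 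In every case the Remark asserting $R^\#\cap R^\dag=R^\core\cap R_\core$ guarantees that the core and dual core inverses occurring in the statement actually exist.

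\emph{Stage 2: computing the double inverses.} The key observation is that for each $b\in\{a^\#,a^\dag,a^\core,a_\core\}$ the element $a$ itself is an inner inverse of $b$: $a^\#aa^\#=a^\#$ and $a^\dag aa^\dag=a^\dag$ are axioms, while $a^\core aa^\core=a^\core$ and $a_\core aa_\core=a_\core$ are equation (2) in Theorems \ref{idempotenti za core inverz}(iii) and \ref{idempotenti za dual core inverz}(iii). Substituting $b^{(1)}=a$ and the idempotents of Stage 1 into $b^\#=q_bb^{(1)}q_b$, $b^\dag=r_bb^{(1)}p_b$, $b^\core=q_bb^{(1)}p_b$, $b_\core=r_bb^{(1)}q_b$ from (\ref{inverzi na jednom mestu}) expresses each double inverse as a product $e_1ae_2$ with $e_1,e_2\in\{q_a,q_a^*,p_a,r_a\}$. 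It then remains only to simplify, which uses nothing beyond the elementary identities $q_aa=aq_a=a$, $p_aa=a$ and $ar_a=a$ (immediate from $q_a=aa^\#=a^\#a$, $p_a=aa^\dag$, $r_a=a^\dag a$): this yields, for instance, $(a^\#)^\dag=r_aap_a$, $(a^\#)^\core=q_aap_a=ap_a$, $(a^\#)_\core=r_aaq_a=r_aa$, $(a^\dag)^\#=q_a^*aq_a^*$, $(a^\dag)^\core=q_a^*ar_a=q_a^*a$, $(a^\dag)_\core=p_aaq_a^*=aq_a^*$, while in parts (iii) and (iv) all three idempotents coincide, so all four double inverses of $a^\core$ collapse to $p_aap_a=ap_a$ and all four of $a_\core$ to $r_aar_a=r_aa$.

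The argument is entirely mechanical once the idempotents are in hand; the only point that needs genuine care is the appearance of $q_a^*$ (rather than $q_a$) in part (ii), which rests on the facts that the one-sided ideals attached to $a^\dag$ are exactly those of $a^*$ and that $(a^*)^\#=(a^\#)^*$. No deeper obstacle arises.
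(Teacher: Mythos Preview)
Your proof is correct and follows essentially the same approach as the paper: the paper (which only writes out part (iii) in detail) also identifies the idempotents by comparing the one-sided ideals generated by $a^\core$ with those of $p_a$, and then reads off all four double inverses from the formulas (\ref{inverzi na jednom mestu}) using $a$ as an inner inverse. Your two-stage write-up is a more explicit and uniform rendering of exactly this method.
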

\begin{proof}
  We give the proof only for the statement (iii); the other statements may be proved in the same manner. Since $a^\core R=aR=p_aR$ and $Ra^\core=Ra^*=(aR)^*=(p_aR)^*=Rp_a$, we conclude that $$p_{a^\core}=q_{a^\core}=r_{a^\core}=p_a.$$ By (\ref{inverzi na jednom mestu}), we obtain
      \begin{align*}
        (a^\core)^\#&=(a^\core)^\dag=(a^\core)^\core =(a^\core)_\core=\bmatrix p_a(a^\core)^{(1)}p_a & 0 \\ 0 & 0 \endbmatrix_{p_a\times p_a}\\
        &=\bmatrix p_aap_a & 0 \\ 0 & 0 \endbmatrix_{p_a\times p_a}=\bmatrix ap_a & 0 \\ 0 & 0 \endbmatrix_{p_a\times p_a}.
      \end{align*}
\end{proof}

From Theorem \ref{dupli inverzi} it follows that $a^\core$ and $a_\core$ are EP.
The properties of core inverse given in the following theorem is a generalization of the case $R=M_n$ (see \cite{BT}) to the case of arbitrary $*$-ring.

\begin{theorem}\label{generalizacija}
  Let $a\in R^\core$ and $n\in \mathbb{N}$. Then:
  \begin{enumerate}[{\rm (i)}]
    \item $a^\core=a^\#p_a$;
    \item $(a^\core)^2a=a^\#$;
    \item $(a^\core)^n=(a^n)^\core$;
    \item $((a^\core)^\core)^\core=a^\core$;
    \item If $a\in R^\dag$ then $$a^\#=a^\core aa_\core, \quad a^\dag=a_\core aa^\core, \quad a^\core=a^\#aa^\dag, \quad a_\core=a^\dag aa^\#.$$
  \end{enumerate}
\end{theorem}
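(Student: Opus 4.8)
The plan is to exploit the matrix representations and the identities (\ref{p,q,r})--(\ref{inverzi na jednom mestu}) rather than to argue directly from the defining equations, since almost everything collapses to bookkeeping with the idempotents $p=p_a$, $q=q_a$, $r=r_a$. Throughout, recall that for $a\in R^\core$ we have $p$ and $q$ available (Theorem \ref{idempotenti za core inverz}), with $a=paq$, $a^\core=qa^{(1)}p$, $pq=q$, $qp=p$; and when additionally $a\in R^\dag$ all of (\ref{p,q,r})--(\ref{matricne forme prosirene}) apply. For (i), I would simply compute $a^\#p_a=\bmatrix a^\#&0\\0&0\endbmatrix_{p\times p}\cdot\bmatrix p&0\\0&0\endbmatrix_{p\times p}$ using the representation of $a^\#$ in the $p\times p$ frame from (\ref{matricne forme prosirene}) and of $p_a=\bmatrix p&0\\0&0\endbmatrix$; but the cleaner route is to use (\ref{inverzi na jednom mestu}): $a^\#p_a=qa^{(1)}q\cdot p$; since $qp=p$ this equals $qa^{(1)}p=a^\core$ provided $a^{(1)}$ can be chosen with $a^{(1)}p$ appearing — here I would instead invoke $a^\#=qa^{(1)}q$ and $a^\core=qa^{(1)}p$ for a \emph{common} $a^{(1)}\in a\{1\}$ (legitimate by the invariance statements), and note $p=aa^{(1)}p$ from the proof of Theorem \ref{idempotenti za MP inverz}, so $a^\#p=a^\#aa^{(1)}p=qa^{(1)}p=a^\core$ using $a^\#a=q$ and $qa^{(1)}p=qa^{(1)}(aa^{(1)})p=qa^{(1)}p$. (Note (i) needs only $a\in R^\core$, and $p_a=aa^\core$ exists by Theorem \ref{idempotenti za core inverz}(iv).)

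For (ii): $(a^\core)^2a=a^\core(a^\core a)$; by (\ref{p,q,r}) $a^\core a=q$ when $a\in R^\#$, which holds here since $R^\core\subseteq R^\#$ by the Remark; so $(a^\core)^2a=a^\core q=qa^{(1)}pq=qa^{(1)}p\cdot$—wait, more carefully, $a^\core q = a^\core a a^\# = q a^\#$ is not obviously $a^\#$. Better: from equation (7) in Theorem \ref{idempotenti za core inverz}(iii), $a(a^\core)^2=a^\core$, hence $(a^\core)^2=a^\core a(a^\core)^2=a^\core\cdot a^\core$ — that is circular. The right approach: $(a^\core)^2 a = a^\core(a^\core a)=a^\core q$ and using $a^\core=a^\#p$ from (i) and $pq=q$, this is $a^\#pq\cdots$ no—$(a^\#p)q$ with $p q=q$ gives $a^\# q = a^\# a a^\# = a^\#$. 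So $(a^\core)^2a=a^\#q\cdot$, and $a^\#q=a^\#(a^\#a)=a^\#\cdot$ hmm $a^\#q=a^\#aa^\#=a^\#$. Thus (ii) follows once (i) and $a^\core a=q$, $a^\#q=a^\#$ are in hand. For (iii), I would show $q_{a^n}=q_a$, $p_{a^n}=p_a$ by an induction using $a=paq$ and the fact that in the $p\times q$ frame $a^n=\bmatrix a^n&0\\0&0\endbmatrix_{p\times q}$ with $a^n$ invertible in the corner iff $a$ is; then $(a^n)^\core=q_a(a^n)^{(1)}p_a$ and separately $(a^\core)^n=\bmatrix (a^\core)^n&0\\0&0\endbmatrix_{q\times p}$ — one checks both equal $\bmatrix a^{-n}&0\\0&0\endbmatrix$ in the common frame, where $a^{-n}$ denotes the inverse of $a^n$ in the corner ring $pRq$-conjugate. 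Part (iv) is then immediate: $((a^\core)^\core)^\core=a^\core$ follows by applying (iii) with $n=1$ three times, or directly from Theorem \ref{dupli inverzi}(iii) which gives $(a^\core)^\core=ap_a$ and then $((a^\core)^\core)^\core=(ap_a)^\core$; since $ap_a$ has the same idempotents $p_a=q_a=r_a$ restricted appropriately, $(ap_a)^\core=p_a(ap_a)^{(1)}p_a=\cdots=a^\core$.

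For (v), assume $a\in R^\#\cap R^\dag=R^\core\cap R_\core$ (by the Remark), so all of $q,p,r$ and $a^\#,a^\dag,a^\core,a_\core$ exist with representations (\ref{inverzi na jednom mestu}) for a common $a^{(1)}$. Then each identity is a one-line multiplication: $a^\core aa_\core=(qa^{(1)}p)a(ra^{(1)}q)=qa^{(1)}(pa)(ar)a^{(1)}q=qa^{(1)}a\,a a^{(1)}q$ using $pa=a=ar$, $=qa^{(1)}aa^{(1)}q\cdot$ wait that drops an $a$; recompute: $pa\cdot ar=a\cdot a=a^2$? No: $a^\core a a_\core=qa^{(1)}p\cdot a\cdot ra^{(1)}q=qa^{(1)}(pa r)a^{(1)}q$ and $par=a$ by (\ref{inverzi na jednom mestu}), so $=qa^{(1)}aa^{(1)}q=q a^{(1)} q=a^\#$. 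Identically $a_\core a a^\core=ra^{(1)}q\cdot a\cdot qa^{(1)}p=ra^{(1)}(qaq)a^{(1)}p=ra^{(1)}aa^{(1)}p=ra^{(1)}p=a^\dag$ using $qaq=a$; $a^\#aa^\dag=qa^{(1)}q\cdot a\cdot ra^{(1)}p=qa^{(1)}(qar)a^{(1)}p=qa^{(1)}aa^{(1)}p=qa^{(1)}p=a^\core$ using $qar=a$; and $a^\dag a a^\#=ra^{(1)}p\cdot a\cdot qa^{(1)}q=ra^{(1)}(paq)a^{(1)}q=ra^{(1)}aa^{(1)}q=ra^{(1)}q=a_\core$ using $paq=a$.

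The only genuine obstacle is part (iii): making precise the claim that in the corner $pRq$ (an abelian-ish two-sided localization) the element $a$ is "invertible" and its powers behave correctly, since $pRq$ is not a ring when $p\neq q$. I would handle this via the device already flagged in the excerpt after (\ref{matricne forme prosirene}): there is a unique $t\in qRp$ with $ta=q$, $at=p$ (namely $t=a^\core$ restricted), so $a$ behaves like a unit between the two corners; then $a^n$ has the analogous two-sided inverse $t^n=(a^\core)^n$, and uniqueness of such a two-sided inverse forces $(a^\core)^n=(a^n)^\core$ once one checks $(a^\core)^n$ satisfies the three defining equations of $(a^n)^\core$ (i.e. $a^n(a^\core)^na^n=a^n$, $(a^\core)^nR=a^nR$, $R(a^\core)^n=R(a^n)^*$). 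Verifying those three amounts to: $a^n(a^\core)^na^n=a^{n-1}(a a^\core a)(a^\core)^{n-1}a^{n-1}\cdots$ — routine telescoping using $aa^\core a=a$ and $a^\core a a^\core=a^\core$ together with $a^\core a=q$ commuting past $a$ appropriately ($a^\core a\cdot a=q a=a=a\cdot q=a a a^\core$), plus $a^nR=p_aR=a^\core R\supseteq(a^\core)^nR$ and the reverse via $a^n=(a^\core)^n$-multiple — here $a^n=(a^\core a)a^n=\cdots$ and one shows $a^n=(a^\core)^n a^{2n}$ inductively. I would present (iii) in that order and leave the telescoping to the reader, as the paper does elsewhere.
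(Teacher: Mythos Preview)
Your overall strategy---reduce everything to the idempotents $p,q,r$ and the formulae (\ref{p,q,r})--(\ref{inverzi na jednom mestu})---matches the paper's, and your arguments for (i), (ii), (iv), (v) are correct. The paper's versions are, however, considerably shorter. For (i) it writes simply $a^\core=a^\core aa^\core=a^\#aa^\core=a^\#p_a$, using $a^\core a=q=a^\#a$; for (ii) it is exactly your final line $(a^\core)^2a=a^\core q=a^\#pq=a^\#q=a^\#$; for (v) it avoids the $a^{(1)}$ bookkeeping entirely by substituting directly from (\ref{p,q,r}), e.g.\ $a^\#=a^\#aa^\#=(a^\core a)a^\#=a^\core(aa_\core)=a^\core aa_\core$, and similarly for the other three. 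Your route through $qa^{(1)}(par)a^{(1)}q$ works (noting $a^{(1)}aa^{(1)}\in a\{1\}$ so invariance applies), but the paper's two-substitution argument is what you should aim for.

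Part (iii) is where you diverge most, and your treatment is incomplete. You correctly identify that one must show $p_{a^n}=p_a$ and $q_{a^n}=q_a$, but the paper does this in one line via $a=a^n(a^\#)^{n-1}=(a^\#)^{n-1}a^n$, which immediately gives $a^nR=aR$ and $Ra^n=Ra$---no induction or corner-ring discussion needed. The step you flag as the ``genuine obstacle'' is then dissolved much more cleanly than your proposal: from equation (7), $a(a^\core)^2=a^\core$, a single telescoping gives $a^n(a^\core)^n=aa^\core$, hence $a^n(a^\core)^na^n=aa^\core a^n=a^n$, so $(a^\core)^n\in a^n\{1\}$. Now by invariance and the formula from Theorem \ref{idempotenti za core inverz}, $(a^n)^\core=q_a(a^\core)^np_a=(a^\core)^n$, the last equality because $q_aa^\core=a^\core=a^\core p_a$. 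Your alternative plan of verifying $(a^\core)^nR=a^nR$ and $R(a^\core)^n=R(a^n)^*$ directly would also succeed, but it is more work and you did not carry it through; the ``invertibility between corners'' heuristic is suggestive but not a proof in this ring setting.
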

\begin{proof}
Since $a\in R^\core$ we have the existence of $a^\#$, $q_a$ and $p_a$.
\begin{enumerate}[{\rm (i):}]
  \item By Theorem \ref{idempotenti za core inverz} (iii) and (\ref{p,q,r}), we have $a^\core=a^\core aa^\core =a^\#aa^\core =a^\# p_a$.
  \item $(a^\core)^2a=a^\core q_a\stackrel{(i)}{=}a^\#p_aq_a\stackrel{(\ref{pq=q})}{=}a^\#q_a=a^\#$.
  \item Since $a=a^n(a^\#)^{n-1}=(a^\#)^{n-1}a^n$ we conclude that $Ra^n=Ra=Rq_a$ and $a^nR=aR=q_aR=p_aR$. Using $a(a^\core)^2=a^\core$ we see that $a^n(a^\core)^n=aa^\core$ so $a^n(a^\core)^na^n=aa^\core a^n=a^n$, i.e. $(a^\core)^n\in a^n\{1\}$. By Theorem \ref{idempotenti za core inverz}, we obtain
      $$(a^n)^\core=q_a(a^n)^{(1)}p_a=q_a(a^\core)^np_a=(a^\core)^n.$$
  \item By the proof of (iii) of Theorem \ref{dupli inverzi}, we obtain $$((a^\core)^\core)^\core=a^\core p_{a^\core}=a^\core p_a=a^\core.$$
  \item If $a\in R^\dag$ then, by (\ref{p,q,r}), $a^\#=a^\# aa^\#=a^\core aa_\core$, $a^\dag=a^\dag aa^\dag=a_\core aa^\core$, $a^\core=a^\core aa^\core=a^\#aa^\dag$ and $a_\core=a_\core aa_\core=a^\dag aa^\#$.
\end{enumerate}
\end{proof}
The analogous result for dual core inverse of $a\in R_\core$ is valid. The expressions in (v) in Theorem \ref{generalizacija} perhaps best illustrate the connection between the group, MP, core and dual core inverse. Once again, we see that the core and dual core inverse are between group and MP inverse and vice versa.

\section{Characterizations of EP elements}

In this section, we consider the equivalent conditions for EP-ness of $a\in R$.
Recall that $a\in R$ is EP if $a\in R^\# \cap R^\dag$ and $a^\#=a^\dag$. EP matrices and EP operators have been extensively studied. Recently, the EP elements are investigated in the context of rings with involution. For a recent account of the theory see, for example, \cite{Chen}, \cite{Mosic} and the references given there.

\begin{theorem}\label{theorem EP 1}
  Let $a\in R$. The following assertions are equivalent:
  \begin{enumerate}[{\rm (i)}]
    \item $a$ is EP, i.e. $a\in R^\#\cap R^\dag$ and $a^\#=a^\dag$.
    \item $a\in R^\dag$ and $p_a=r_a$.
    \item $a\in R^\core$ and $p_a=q_a$.
    \item $a\in R_\core$ and $r_a=q_a$.
    \item $a\in R^\core$ and $a^\#=a^\core$.
    \item $a\in R_\core$ and $a^\#=a_\core$.
    \item $a\in R^\#\cap R^\dag$ and $a^\dag=a^\core$.
    \item $a\in R^\#\cap R^\dag$ and $a^\dag=a_\core$.
    \item $a\in R^\#\cap R^\dag$ and $a^\core=a_\core$.
  \end{enumerate}
\end{theorem}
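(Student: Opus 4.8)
The plan is to exploit the characterizations already established in Theorems~\ref{idempotenti za grup inverz}--\ref{idempotenti za dual core inverz} together with the formulas (\ref{p,q,r}), (\ref{pq=q})--(\ref{inverzi na jednom mestu}), so that each equivalence reduces to an identity between the idempotents $p_a$, $q_a$, $r_a$. The central organizing observation is this: if $a\in R^\#\cap R^\dag$, then by the Remark following Theorem~\ref{idempotenti za dual core inverz} all five inverses $a^\#, a^\dag, a^\core, a_\core$ exist and share the \emph{same} underlying idempotents, related by (\ref{p,q,r}); in particular, fixing any $a^{(1)}\in a\{1\}$, we have $a^\#=q_aa^{(1)}q_a$, $a^\dag=r_aa^{(1)}p_a$, $a^\core=q_aa^{(1)}p_a$, $a_\core=r_aa^{(1)}q_a$ by (\ref{inverzi na jednom mestu}). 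So two of these inverses coincide precisely when the corresponding ``sandwiching'' idempotents agree, and by the uniqueness lemmas \ref{lema jedinstveno q} and \ref{lema jedinstveni p i r} the idempotents agree precisely when the relevant ranges/annihilators do.

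First I would establish the backbone chain (i) $\Leftrightarrow$ (ii): if $a$ is EP then $a\in R^\dag$ and $p_a=aa^\dag=aa^\#=a^\#a=a^\dag a=r_a$; conversely, if $a\in R^\dag$ and $p_a=r_a$, then setting $q:=p_a=r_a$ one checks $qR=aR$ and $Rq=Ra$, so by Theorem~\ref{idempotenti za grup inverz} $a$ is group invertible with $q_a=p_a=r_a$, and then $a^\#=q_aa^{(1)}q_a=r_aa^{(1)}p_a=a^\dag$. Next, (ii) $\Leftrightarrow$ (iii): note $a\in R^\dag\Rightarrow a\in R^\#$ is not automatic, so I would argue via Theorem~\ref{idempotenti za core inverz}(iv): if $a\in R^\dag$ with $p_a=r_a$, then $a$ is core invertible (take the self-adjoint idempotent $p_a$ and the idempotent $q_a$) and $p_a=q_a$; conversely if $a\in R^\core$ with $p_a=q_a$ then $q_a$ is a self-adjoint idempotent with $Rq_a=Rq_a=Ra^*{}$... — more cleanly, from $p_a=q_a$ self-adjoint and $Rq_a=Ra$ we get $a\in R^\dag$ by Theorem~\ref{idempotenti za MP inverz} with $r_a=q_a=p_a$. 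The equivalence (ii) $\Leftrightarrow$ (iv) is the dual-core mirror of (ii) $\Leftrightarrow$ (iii) and goes through Theorem~\ref{idempotenti za dual core inverz}(iv) identically.

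For the remaining equivalences I would show each is tied to one of (ii)--(iv). For (iii) $\Leftrightarrow$ (v): if $a\in R^\core$, then by Theorem~\ref{generalizacija}(i) $a^\core=a^\#p_a$ (note $a\in R^\core\Rightarrow a\in R^\#$), so $a^\#=a^\core\Leftrightarrow a^\#=a^\#p_a\Leftrightarrow a^\#(1-p_a)=0\Leftrightarrow (1-p_a)\in(a^\#)^\circ=a^\circ=q_a^\circ$, which (since $p_a$ is idempotent) is equivalent to $p_a=q_ap_a$; combined with $q_ap_a=p_a$ from (\ref{pq=q}) and a symmetric argument on the other side, this forces $p_a=q_a$. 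Statement (vi) $\Leftrightarrow$ (iv) is the analogous computation with $a_\core=r_aa^\#$. For (vii), (viii), (ix), assuming $a\in R^\#\cap R^\dag$, I would simply use (\ref{inverzi na jednom mestu}): $a^\dag=a^\core\Leftrightarrow r_aa^{(1)}p_a=q_aa^{(1)}p_a$ for all $a^{(1)}$; taking $a^{(1)}=a^\#$ and multiplying left by $a$ and right by $a$ collapses this to $r_a=q_a$, i.e. (iv) — and conversely (iv) gives it back via the same formulas; similarly $a^\dag=a_\core\Leftrightarrow p_a=q_a$, i.e. (iii), and $a^\core=a_\core\Leftrightarrow q_aa^{(1)}p_a=r_aa^{(1)}q_a$, which after pre/post-multiplying by $a$ yields $p_a=q_a=r_a$, equivalent to either (iii) or (iv). Assembling, all nine statements form a single equivalence class.

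The main obstacle, and the point requiring care rather than cleverness, is bookkeeping the implicit existence hypotheses: statements (iii) and (v) only presuppose $a\in R^\core$, (iv) and (vi) only $a\in R_\core$, while (vii)--(ix) presuppose $a\in R^\#\cap R^\dag$, and (ii) only $a\in R^\dag$. So at each step one must first upgrade the ambient hypothesis to ``$a$ possesses all the inverses needed'' — for instance, deducing $a\in R^\dag$ from $a\in R^\core$ together with the idempotent equality — before the clean idempotent calculus applies. The Remark after Theorem~\ref{idempotenti za dual core inverz} ($R^\#\cap R^\dag=R^\core\cap R_\core$, and $R^\core\cup R_\core\subseteq R^\#$) and Theorem~\ref{idempotenti za core inverz}(iv)/\ref{idempotenti za MP inverz}(ii) are exactly the tools that make these upgrades routine, so once the dependency graph among (i)--(ix) is drawn, no step is individually hard.
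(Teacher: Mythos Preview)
Your proposal is correct and follows essentially the same approach as the paper: reduce every condition to an equality among the idempotents $p_a,q_a,r_a$ via the formulas (\ref{p,q,r}) and (\ref{inverzi na jednom mestu}), using Lemmas~\ref{lema jedinstveno q}--\ref{lema jedinstveni p i r} and Theorems~\ref{idempotenti za grup inverz}--\ref{idempotenti za dual core inverz} to handle the existence upgrades. One small slip to fix: in your (iii) $\Leftrightarrow$ (v) argument, the condition $(1-p_a)\in q_a^\circ$ gives $q_a=q_ap_a$, not $p_a=q_ap_a$ (the latter is already (\ref{pq=q}) and would be vacuous); combined with $q_ap_a=p_a$ this then yields $q_a=p_a$ as you want. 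Note also that the paper handles (v)--(ix) more directly than your annihilator and sandwich-formula manipulations: it simply multiplies the assumed equality of inverses by $a$ on one side (e.g.\ $a^\#=a^\core$ gives $aa^\#=aa^\core$, i.e.\ $q_a=p_a$), which immediately collapses to the idempotent equality.
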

\begin{proof}
  (i) $\Rightarrow$ (ii) -- (ix): If $a$ is EP then $$p_a=aa^\dag=aa^\#=q_a=a^\#a=a^\dag a=r_a.$$ By (\ref{inverzi na jednom mestu}), $$a^\#=a^\dag=a^\core=a_\core=q_aa^{(1)}q_a.$$

  (ii) or (iii) or (iv) $\Rightarrow$ (i): Suppose that $p_a=r_a$. We have $p_aR=aR$ and $Rp_a=Rr_a=Ra$ so there exists $q_a$ and $p_a=q_a=r_a$.  Hence, $a\in R^\#$ and $aa^\dag=p_a=q_a=aa^\#$. Hence, $aa^\#=a^\#a$ is self-adjoint, so $a^\dag=a^\#$.
  Similarly, $p_a=q_a$ or $r_a=q_a$ imply $p_a=r_a=q_a$ and we can proceed as before.

  (v) $\Rightarrow$ (i): Suppose that $a\in R^\core$ and $a^\core=a^\#$. Multiplying both sides by $a$ from the left, we obtain $p_a=aa^\core=aa^\#=q_a$. From the previous part of the proof, it follows that $a$ is EP.

  The remaining implications may be shown similarly.
\end{proof}
Thus, $a$ is EP if and only if $a\in R^\# \cap R^\dag$ and $a^\#=a^\dag=a^\core =a_\core$. Some characterizations in the next theorem involve only group and MP inverse. Note that some of these characterizations are known.
We give them for completeness. For $x,y\in R$ we write $[x,y]=xy-yx$.
\begin{theorem}\label{isprdjivanje}
  Let $a\in R^\dag\cap R^\#$. Then the following assertions are equivalent:
  \begin{enumerate}[{\rm (i)}]
    \item $a$ is EP.
    \item At least one (any) element of the set $$\{[a,a^\dag],[a,a^\core],[a,a_\core],[a^\#,a^\dag],[a^\#,a^\core],[a^\#,a_\core]\}$$
        is equal zero.
    \item At least one (any) element of the set $$\{ap_a,r_aa,r_aap_a,q^*_aa,aq^*_a,q^*_aaq^*_a\}$$
        is equal $a$.
    \item $ap_a=r_aa$.
    \item $r_aap_a=r_aa$.
    \item $r_aap_a=ap_a$.
    \item $q_a^*a=ap_a$.
    \item $aq_a^*=r_aa$.
  \end{enumerate}
\end{theorem}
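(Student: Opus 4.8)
The plan is to fix $a\in R^\#\cap R^\dag$ and work throughout with the three idempotents $p=p_a$, $q=q_a$, $r=r_a$ and the identities collected in (\ref{p,q,r})--(\ref{inverzi na jednom mestu}). By the Remark following Theorem \ref{idempotenti za dual core inverz}, $a$ also lies in $R^\core\cap R_\core$, so Theorem \ref{theorem EP 1} applies and furnishes the one decisive fact I will lean on: $a$ is EP if and only if two (hence all three) of $p,q,r$ coincide, and in that case the common idempotent equals $p$ --- in particular it is self-adjoint --- and $a^\#=a^\dag=a^\core=a_\core$. From this I would record three reductions to be invoked repeatedly. First, $ap=a\iff p=q\iff a$ is EP: for $ap=a\iff 1-p\in a^\circ=q^\circ\iff qp=q$, and $qp=p$ by (\ref{pq=q}). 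Symmetrically $ra=a\iff r=q\iff a$ is EP. Third, if $q^*q=q$ (or $qq^*=q$) then, the left-hand side being self-adjoint, $q$ is a self-adjoint idempotent with $qR=aR=pR$, so $q=p$ by Lemma \ref{lema jedinstveni p i r}. Alongside these I would use the routine rules $a^\#a=aa^\#=a^\core a=aa_\core=q$, $aa^\dag=aa^\core=p$, $a^\dag a=a_\core a=r$, $pa=a=aq=qa=ar$, $a^2a^\#=a^\#a^2=a$, and the consequences $a^\#=a^\#q=qa^\#$, $pa^\#=a^\#$, $a^\#r=a^\#$ of $a^\#=a^\#aa^\#$ together with $pq=q$, $qr=q$.

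With this toolkit, (i)$\Rightarrow$(ii)--(viii) becomes a direct substitution: when $a$ is EP, $p=q=r$ is self-adjoint, so every listed commutator is zero, every element in the set of (iii) equals $a$, and both sides of each of (iv)--(viii) equal $a$. For the converses I would treat one displayed quantity at a time. The commutators $[a,a^\dag]=p-r$, $[a,a^\core]=p-q$, $[a,a_\core]=q-r$ are differences of idempotents, so their vanishing is exactly one of the equalities above. For the three commutators built from $a^\#$, multiplying by $a$ on both sides only produces $q=q$; instead I would multiply $a^\#a^\dag=a^\dag a^\#$ on the right by $a^2$: using $a^\dag a^2=ra$, $a^\#r=a^\#$, $a^\#a^2=a$ the two sides reduce to $q$ and $r$, so $q=r$. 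Multiplying by $a^2$ on the left handles $[a^\#,a^\core]=0$ (the sides reduce, via $a^2a^\core=ap$ and $pa^\#=a^\#$, to $p$ and $q$), and by $a^2$ on the right handles $[a^\#,a_\core]=0$ (reducing to $q$ and $r$).

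For assertion (iii), $ap=a$ and $ra=a$ are the first two reductions; $q^*a=a$ (resp.\ $aq^*=a$) becomes $q^*q=q$ (resp.\ $qq^*=q$) on right (resp.\ left) multiplication by $a^\#$, hence the third reduction; $rap=a$, multiplied on the left by $a^\#$, gives $a^\#(rap)=(a^\#r)(ap)=(a^\#a)p=qp=p$ and simultaneously $a^\#(rap)=a^\#a=q$, so $p=q$; finally $q^*aq^*=a$, on taking adjoints, reads $qa^*q=a^*$, and right multiplication of that by $q$ gives $a^*q=a^*$, i.e.\ $q^*a=a$ after one more adjoint, a case already settled. For (iv)--(viii) I would multiply the stated equality by $a^\#$ on whichever side replaces the factor $a$ by $q$: left multiplication by $a^\#$ carries $ap\mapsto p$, $ra\mapsto q$, $rap\mapsto p$, $aq^*\mapsto qq^*$, and right multiplication by $a^\#$ carries $ap\mapsto q$, $rap\mapsto r$, $q^*a\mapsto q^*q$. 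Thus (iv) and (v) yield $p=q$, (vi) yields $r=q$, (vii) yields $q^*q=q$, (viii) yields $qq^*=q$; in each case the reductions conclude $a$ is EP.

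I expect the only genuine obstacle to be bookkeeping --- the statement is long only because it runs through a dozen parallel cases, each a one- or two-line computation once the toolkit is set up. The two places that need a moment's thought are the commutators formed from $a^\#$, where multiplying by $a$ loses the information and one must multiply by $a^2$ on the correct side, and the symmetric element $q^*aq^*=a$, which is reduced to an already-handled case by passing to adjoints.
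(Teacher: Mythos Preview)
Your argument is correct and rests on the same toolkit as the paper --- the idempotent identities (\ref{p,q,r})--(\ref{inverzi na jednom mestu}) together with Theorem \ref{theorem EP 1}. The execution, however, differs. You systematically clear the factor $a$ by multiplying on the appropriate side by $a^\#$ (or by $a^2$), so that every hypothesis becomes a pure statement about $p,q,r,q^*$; then you conclude via one of your three ``reductions'' (notably the self-adjointness trick $q^*q=q\Rightarrow q=p$ via Lemma \ref{lema jedinstveni p i r}). The paper instead uses the catalog (\ref{pq=q})--(\ref{pq*=p}) to bring each hypothesis back to one of the base cases $ap=a$ or $ra=a$: e.g.\ from $ap=ra$ it argues $a=qa=qra=qap=ap$, and from $q^*a=a$ it uses $rq^*=q^*$ to get $a=rq^*a=ra$. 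Your route is a bit more uniform; the paper's is a bit shorter per case since it avoids the extra Lemma~\ref{lema jedinstveni p i r} step.

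One small correction to your commentary on the $a^\#$--commutators: multiplying by $a$ does \emph{not} in fact lose the information, provided you do it on only one side. The paper multiplies $a^\#a^\core=a^\core a^\#$ on the left by $a$ and reads off $qa^\core=pa^\#$, which by (\ref{inverzi na jednom mestu}) is exactly $a^\core=a^\#$; the case $[a^\#,a^\dag]=0$ is handled the same way (left multiplication by $a$ gives $qa^\dag=pa^\#$, i.e.\ $a^\core=a^\#$). Your $a^2$--multiplication works too, but is a detour.
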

\begin{proof}
  Write $p=p_a$, $q=q_a$ and $r=r_a$.

  (i) $\Rightarrow$ (ii)--(ix): If $a$ is EP then by Theorem \ref{theorem EP 1}, $a^\#=a^\dag=a^\core=a_\core$ and $q=p=r=q^*$. Now, the proofs easily follow.

  For the proofs of converse implications we use (\ref{p,q,r})--(\ref{inverzi na jednom mestu}) and Theorem \ref{theorem EP 1} or one of the preceding already establish conditions.

  (ii) $\Rightarrow$ (i): We need to show that if there exist some element from the set which is equal zero then $a$ is EP. If $aa^\core=a^\core a$ then $p=q$. By Theorem \ref{theorem EP 1}, $a$ is EP. Suppose that $[a^\#,a^\core]=0$, that is $a^\#a^\core=a^\core a^\#$. Multiplying both sides from the left by $a$ we obtain $qa^\core=pa^\#$. By (\ref{inverzi na jednom mestu}), $a^\core=a^\#$, so $a$ is EP. Suppose that $a^\#a^\dag=a^\dag a^\#$. Multiplying both sides from the left by $a$ we obtain $a^\core=pa^\#=a^\#$, so $a$ is EP. Other cases ($[a,a^\dag]=0$, $[a,a_\core]=0$, $[a^\#,a_\core]=0$) may be proved similarly.

  (iii) $\Rightarrow$ (i): If $ap=a$ then, multiplying both sides from the left by $a^\#$, we obtain $qp=q$, hence, $p=q$. Therefore, $a$ is EP. If $ra=a$ then $q=aa^\#=raa^\#=rq=r$, thus $a$ is EP. If $rap=a$ then $a=qa=qrap=qap=ap$. If $q^*a=a$ then $a=q^*a=rq^*a=ra$. If $aq^*=a$ then $a=aq^*=aq^*p=ap$. Finally, if $q^*aq^*=a$ then $a=q^*aq^*p=ap$.

  (iv) $\Rightarrow$ (i): Suppose that $ap=ra$. Since $qr=q$ we have $a=qa=qra=qap=ap$. By the previous part of the proof, we conclude that $a$ is EP.

  (v) $\Rightarrow$ (i): If $rap=ra$ then $a=qa=qra=qrap=qap=ap$.

  (vi) $\Rightarrow$ (i): If $rap=ap$ then $a=aq=apq=rapq=raq=ra$.

  (vii) $\Rightarrow$ (i): Suppose that $q^*a=ap$. Since $pq^*=p$ we obtain $a=pa=pq^*a=pap=ap$. By the part (iii) $\Rightarrow$ (i) it follows that $a$ is EP.

  (viii) $\Rightarrow$ (i): As $q^*r=r$ we conclude that $aq^*=ra$ implies $a=ar=aq^*r=rar=ra$, so $a$ is EP.
\end{proof}
Combining Theorem \ref{dupli inverzi} with Theorem \ref{isprdjivanje}, we can generalize results from \cite{BT} and obtain a large number of new characterizations of EP-ness of $a$.

\section{Connection with some classes of generalized inverses}
In this section we will show that group, MP, core and dual core inverse belong to some specific classes of generalized inverses.
Recently, Mary introduced in \cite{Mary} a new generalized inverse in semigroup $S$ called the inverse along an element. We consider the case when $S$ is a $*$-ring $R$. For $a,b\in R$, pre-order relation $\mathcal{H}$ is defined in \cite{Mary} by
$$a\leq_\mathcal{H}b \; \Longleftrightarrow \; Ra\subseteq Rb \text{ and } aR\subseteq bR.$$
\begin{definition}\label{inverse along element}(\cite{Mary})
  Let $a,d\in R$. We say that $x\in R$ is an inverse of $a$ along $d$ if it satisfies
$$xad=d=dax \quad \text{and} \quad x\leq_\mathcal{H}d.$$
\end{definition}
It is proved that if an inverse of $a$ along $d$ exists, it is unique and it is outer generalized inverse of $a$. Mary proved in Theorem 11 in \cite{Mary} that $a\in R$ is group invertible if and only if it is invertible along $a$ in which case the inverse of $a$ along $a$ coincides with the group inverse of $a$. Also, $a\in R$ is MP invertible if and only if it is invertible along $a^*$ in which case the inverse of $a$ along $a^*$ coincides with the MP inverse of $a$.

Recently, Drazin independently defined in \cite{Drazin} a new outer generalized inverse in semigroup $S$ that is actually similar to the inverse along an element. We consider the case when $S$ is a $*$-ring $R$.
\begin{definition}\label{bc inverse}(\cite{Drazin})
  Let $a,b,c,x\in R$. Then we shall call $x$ a $(b,c)$-inverse of $a$ if both
  \begin{enumerate}[{\rm (1)}]
    \item $x\in (bRx)\cap (xRc)$ and
    \item $xab=b$, $cax=c$.
  \end{enumerate}
\end{definition}
It is proved that there can be at most one $(b,c)$-inverse $x$ of $a$ and $xax=x$. Drazin proved in \cite{Drazin} that $a^\#$ is $(a,a)$-inverse of $a$ and that $a^\dag$ is $(a^*,a^*)$ inverse of $a$.

Our aim is to connect the core and dual core inverse of $a$ with generalized inverses given in definitions \ref{inverse along element} and \ref{bc inverse}.

\begin{theorem}\label{duz elementa}
  Let $a\in R^\dag$. Then:
  \begin{enumerate}[{\rm (i)}]
    \item $a$ is core invertible if and only if it is invertible along $aa^*$. In this case the inverse along $aa^*$ coincides with core inverse of $a$.
    \item $a$ is dual core invertible if and only if it is invertible along $a^*a$. In this case the inverse along $a^*a$ coincides with dual core inverse of $a$.
  \end{enumerate}
\end{theorem}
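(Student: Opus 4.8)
The plan is to reduce both equivalences to Definitions \ref{definicija core inverse} and \ref{definicija dual core inverse} together with four ideal identities which hold for every $a\in R^\dag$:
$$aa^*R=aR,\qquad Raa^*=Ra^*,\qquad a^*aR=a^*R,\qquad Ra^*a=Ra.$$
They follow from the self-adjointness of $aa^\dag$ and $a^\dag a$ and from $aa^\dag a=a$; for instance $a^\dag a=(a^\dag a)^*=a^*(a^\dag)^*$ gives $a=a(a^\dag a)=(aa^*)(a^\dag)^*\in aa^*R$, while $(a^\dag a)a^*=(a^\dag a)^*a^*=(aa^\dag a)^*=a^*$ gives $a^*=a^\dag(aa^*)\in Raa^*$, and the remaining two are symmetric. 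I will also use the absorption relations $p_aa=a$ and $ar_a=a$ (both from $aa^\dag a=a$) and the two $*$-identities $(a^\dag)^*a^\dag(aa^*)=aa^\dag=p_a$ and $(a^*a)a^\dag(a^\dag)^*=a^\dag a=r_a$, which come from self-adjointness of $aa^\dag$, $a^\dag a$ and $a^\dag aa^\dag=a^\dag$.

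For the forward implication of (i), suppose $a\in R^\dag$ is core invertible; then $a\in R^\#\cap R^\dag$ (since $R^\core\subseteq R^\#$), so (\ref{p,q,r}) gives $aa^\core=p_a$ and $a^\core a=q_a$. I will check from Definition \ref{inverse along element} that $x=a^\core$ is the inverse of $a$ along $d=aa^*$. The condition $a^\core\leq_{\mathcal{H}}aa^*$ is immediate, since $a^\core R=aR=aa^*R$ and $Ra^\core=Ra^*=Raa^*$ by Definition \ref{definicija core inverse} and the first two ideal identities. The equation $a^\core a\,(aa^*)=aa^*$ reduces to $q_aa=a$, and $(aa^*)\,a\,a^\core=aa^*p_a=a(p_aa)^*=aa^*$ uses that $p_a$ is self-adjoint with $p_aa=a$. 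Part (ii) in the forward direction is the left--right mirror image: with $x=a_\core$ and $d=a^*a$ one uses $a_\core a=r_a$, $aa_\core=q_a$, $ar_a=a$, $aq_a=a$ and the identities $a_\core R=a^*R=a^*aR$, $Ra_\core=Ra=Ra^*a$.

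For the converse of (i), let $x$ be the inverse of $a$ along $d=aa^*$. From $xad=d$ we get $d\in xR$, from $dax=d$ we get $d\in Rx$, and combining this with $x\leq_{\mathcal{H}}d$ upgrades the inclusions to $xR=dR=aR$ and $Rx=Rd=Ra^*$. It remains only to produce $axa=a$, for then $x$ meets every requirement of Definition \ref{definicija core inverse}, forcing $x=a^\core$ (in particular $a\in R^\core$). To this end I would left-multiply $dax=d$, that is $(aa^*)ax=aa^*$, by $(a^\dag)^*a^\dag$ and use $(a^\dag)^*a^\dag(aa^*)=p_a$ together with $p_aa=a$ to obtain $ax=p_a=aa^\dag$, whence $axa=p_aa=a$. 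The converse of (ii) is the mirror image: from $xad=d$, that is $xa\,(a^*a)=a^*a$, right-multiply by $a^\dag(a^\dag)^*$; since $(a^*a)a^\dag(a^\dag)^*=r_a$ and $xar_a=xa\,a^\dag a=x(aa^\dag a)=xa$, this gives $xa=r_a=a^\dag a$ and hence $axa=a(xa)=ar_a=a$, so $x=a_\core$ by Definition \ref{definicija dual core inverse}.

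There is no essential difficulty: the content is entirely the bookkeeping forced by the involution and the left--right asymmetry. The two points that need care are (a) picking the correct multiplier --- $(a^\dag)^*a^\dag$ on the left in the core case and $a^\dag(a^\dag)^*$ on the right in the dual core case --- so that the product with $aa^*$ (respectively $a^*a$) collapses to $p_a$ (respectively $r_a$), and (b) keeping the order of multiplications straight, since $p_aa=a$ and $ar_a=a$ hold whereas $ap_a=a$ and $r_aa=a$ hold only when $a$ is EP.
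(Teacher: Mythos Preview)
Your proof is correct and shares the paper's key computational ingredient: the multiplier $z=(a^\dag)^*a^\dag$ satisfying $z(aa^*)=(aa^*)z=aa^\dag=p_a$ (and its mirror $a^\dag(a^\dag)^*$ for $a^*a$). The organization differs in two minor but pleasant ways. First, in the forward direction the paper verifies $x\leq_{\mathcal H}d$ by writing $x=dzx$ and $x=xzd$ explicitly, whereas you obtain it at once from the ideal identities $a^\core R=aR=aa^*R$ and $Ra^\core=Ra^*=Raa^*$; these in fact yield equalities, which is stronger than needed. Second, in the converse direction the paper checks all five equations of Theorem~\ref{idempotenti za core inverz}(iii), while you go straight to Definition~\ref{definicija core inverse}: once $xR=aa^*R=aR$ and $Rx=Raa^*=Ra^*$ are in hand, left-multiplying $aa^*ax=aa^*$ by $z$ gives $p_a\,ax=p_a$, i.e.\ $ax=p_a$, and hence $axa=a$. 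Your packaging is a little more economical, but the mathematics is the same.
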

\begin{proof}
  (i): Suppose that $a\in R^\dag\cap R^\#$ and let us prove that $x=a^\core$ is inverse of $a$ along $d=aa^*$. Recall that $xa^2=a$ and $(ax)^*=ax$, by Theorem \ref{idempotenti za core inverz}.
  We see at once that
  \begin{align*}
    xad&=x aaa^*=aa^*=d \quad \text{and}\\
    dax&=aa^*ax=aa^*(ax)^*=a(axa)^*=aa^*=d.
  \end{align*}
  We proceed with following observation. Let $z=(a^\dag)^*a^\dag$. Then
  \begin{align}
  aa^*z&=aa^*(a^\dag)^*a^\dag=a(a^\dag a)^*a^\dag=aa^\dag aa^\dag=aa^\dag \quad \text{and} \nonumber\\
  zaa^*&=(a^\dag)^*a^\dag aa^*=(a^\dag)^*(a^\dag a)^*a^*=(aa^\dag aa^\dag)^*=(aa^\dag)^*=aa^\dag. \label{z}
  \end{align}
  Since $ax^2=x$, $xax=x$ and $ax=aa^\dag$ we have
  \begin{align*}
    x&=ax^2=aa^\dag x=aa^*zx=dzx \quad \text{and}\\
    x&=xax=xaa^\dag=xzaa^*=xzd.
  \end{align*}
  It follows that $x\in dR$ and $x\in Rd$ so $xR\subseteq dR$ and $Rx\subseteq Rd$; hence $x\leq_\mathcal{H}d$. By Definition \ref{inverse along element}, we conclude that $a^\core$ is inverse of $a$ along $aa^*$.

  Conversely, suppose that there exists inverse of $a\in R^\dag$ along $aa^*$, denote it by $x$, and let us show that $a\in R^\core$ and $x=a^\core$. By Definition \ref{inverse along element} we have that
  \begin{equation}\label{temp1}
    xa^2a^*=aa^*=aa^*ax
  \end{equation}
  and there exists $w\in R$ such that
  \begin{equation}\label{temp2}
    x=aa^*w.
  \end{equation}
  It is sufficient to show that $x$ satisfies the equations given in Theorem \ref{idempotenti za core inverz} (iii). By (\ref{z}),  we have
  \begin{align*}
    ax&= aa^\dag ax=zaa^*ax=zaa^*=aa^\dag,
  \end{align*}
  so $(ax)^*=ax$. Now, $axa=aa^\dag a=a$. Also,
  \begin{align}
    &xa^2=xaaa^\dag a=xa^2(a^\dag a)^*=xa^2a^*(a^\dag)^*\stackrel{(\ref{temp1})}{=}aa^*(a^\dag)^*=a(a^\dag a)^*=a \label{temp3}\\
    &ax^2\stackrel{(\ref{temp2})}{=}axaa^*w=aa^*w=x \nonumber\\
    &xax=xaaa^*w\stackrel{(\ref{temp3})}{=}aa^*w\stackrel{(\ref{temp2})}{=}x.\nonumber
  \end{align}
  The proof is complete.

  (ii): This statement may be proved in the same manner as (i).
\end{proof}

\begin{theorem}
  Let $a\in R$. Then:
  \begin{enumerate}[{\rm (i)}]
    \item $a$ is core invertible if and only if there exists $(a,a^*)$-inverse of $a$. In this case $(a,a^*)$-inverse of $a$ coincides with core inverse of $a$.
    \item $a$ is dual core invertible if and only if there exists $(a^*,a)$-inverse of $a$. In this case $(a^*,a)$-inverse of $a$ coincides with dual core inverse of $a$.
  \end{enumerate}
\end{theorem}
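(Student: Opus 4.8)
The plan is to prove statement (i); statement (ii) follows by the same argument with the roles of $a$ and $a^*$ interchanged. I would unwind the definition of the $(b,c)$-inverse with $b=a$ and $c=a^*$ and check, in both directions, that it is equivalent to the system of equations characterizing the core inverse in Theorem \ref{idempotenti za core inverz} (iii). Concretely, $x$ is an $(a,a^*)$-inverse of $a$ means: $x\in (aRx)\cap(xRa^*)$, $xa^2=a$, and $a^*ax=a^*$.

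First I would handle the easy direction. Suppose $a\in R^\core$ and set $x=a^\core$. By Theorem \ref{idempotenti za core inverz} (iii), $x$ satisfies $axa=a$, $xax=x$, $(ax)^*=ax$, $xa^2=a$, $ax^2=x$. Then $xa^2=a$ is condition (2a) directly, and $a^*ax=a^*(ax)^*=(axa)^*=a^*$ gives (2b). For condition (1), $x=ax^2=ax\cdot x\in aRx$, and from $Rx=Ra^*$ (the defining property of the core inverse) we get $x\in Ra^*$, hence $x=x\cdot x^*a^*\cdot$… more cleanly, $x=xax=x(ax)^*=xx^*a^*\in xRa^*$. So $x\in (aRx)\cap(xRa^*)$. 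Thus $a^\core$ is an $(a,a^*)$-inverse of $a$, and by uniqueness of the $(b,c)$-inverse it is the only one.

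Conversely, suppose an $(a,a^*)$-inverse $x$ of $a$ exists. From the hypotheses there are $u,v\in R$ with $x=aux$ and $x=xva^*$, together with $xa^2=a$ and $a^*ax=a^*$. I would derive the five equations of Theorem \ref{idempotenti za core inverz} (iii) from these. From $xa^2=a$ we immediately get $axa=a\cdot$ (multiply $xa^2=a$ on the left by nothing — rather, $axa = (ax)a$; better: $axa=a$ follows because $a=xa^2$ gives $a=xa\cdot a$, and then $axa=a(xa^2)$ needs care) — the clean route is: $a=xa^2\Rightarrow aXa$… I would instead use $xax=x$ first: from $x=aux$ and $x=xva^*$, compute $xax=x(aux)a x$… The cleaner observation is that $a^*ax=a^*$ together with the involution yields $(ax)^*a = a$, i.e. $x^*a^*a=a$; combined with $x=xva^*$ one gets $x=xv a^*=xv a^* ax (ax)^*{}^{-1}$… this is getting circular, so the actual work is to find the right order: I expect to first establish $axa=a$ and $xax=x$ (using regularity-type manipulations from $xa^2=a$ and $x\in aRx$), then $ax^2=x$ from $x\in aRx$ and $xax=x$, then $(ax)^*=ax$ from $a^*ax=a^*$ as above, and finally reconfirm $xa^2=a$. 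The main obstacle is precisely this bookkeeping — extracting $axa=a$ and $xax=x$ from the one-sided ideal membership conditions $x\in aRx$, $x\in xRa^*$ without already having an inner inverse in hand — but it should go through by the same style of annihilator/ideal juggling used in the proofs of Theorems \ref{ekvivalentna def za MP inverz} and \ref{idempotenti za core inverz}. Once all five equations hold, Theorem \ref{idempotenti za core inverz} gives $a\in R^\core$ with $x=a^\core$, and uniqueness of the $(a,a^*)$-inverse finishes the proof. Statement (ii) is entirely analogous, using $b=a^*$, $c=a$ and Theorem \ref{idempotenti za dual core inverz} (iii).
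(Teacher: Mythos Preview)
Your forward direction is correct and essentially identical to the paper's argument: you verify $xa^2=a$, $a^*ax=a^*$, and the ideal memberships $x=ax^2\in aRx$ and $x=xx^*a^*\in xRa^*$ exactly as the paper does.

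The converse, however, has a genuine gap. You correctly identify the target --- the five equations of Theorem~\ref{idempotenti za core inverz}(iii) --- but you never find the key step, and the order you propose (first $axa=a$ and $xax=x$, last $(ax)^*=ax$) is the wrong way around. From $a^*ax=a^*$ alone one only gets $(ax)^*a=a$, not $axa=a$; these coincide only \emph{after} you know $ax$ is self-adjoint. The paper's trick is to establish $(ax)^*=ax$ \emph{first}, via the one-line computation
\[
ax \;=\; (a^*)^*x \;=\; (a^*ax)^*x \;=\; x^*a^*\cdot ax \;=\; (ax)^*(ax),
\]
which exhibits $ax$ as a product $y^*y$ and hence self-adjoint. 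Once this is in hand, $axa=(ax)^*a=(a^*ax)^*=a$ is immediate, and then the single ideal condition $x=awx$ (from $x\in aRx$) gives $ax^2=ax\cdot awx=(axa)wx=awx=x$ and $xax=xa\cdot awx=(xa^2)wx=awx=x$. Note that the second ideal condition $x\in xRa^*$ is not even needed. So the ``bookkeeping'' you defer is not routine annihilator juggling in the style of Theorems~\ref{ekvivalentna def za MP inverz} and~\ref{idempotenti za core inverz}; it hinges on this specific self-adjointness computation, without which your proposed order of deductions cannot get started.
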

\begin{proof}
  We will only show the statement (i) because the statement (ii) may be proved similarly.
  Suppose that $a\in R^\core$ and let $x=a^\core$, $b=a$ and $c=a^*$. By the properties of core inverse we obtain
  \begin{align*}
    &xab=xa^2=a=b \\
    &cax=a^*ax=a^*(ax)^*=(axa)^*=a^*=c \\
    &x=xax=ax^2ax=bx^2ax\in bRx \\
    &x=xax=x(ax)^*=xx^*a^*=xx^*c\in xRc.
  \end{align*}
  Therefore, by Definition \ref{bc inverse}, $x=a^\core$ is $(a,a^*)$-inverse of $a$.

  Conversely, suppose that $(a,a^*)$-inverse of $a$ exists, denote it by $x$, and let us show that $x$ satisfies equations given in Theorem \ref{idempotenti za core inverz} (iii). By Definition \ref{bc inverse},
  \begin{equation}\label{temp4}
    xa^2=a, \quad a^*ax=a^*
  \end{equation}
  and there exists $w\in R$ such that
  \begin{equation}\label{temp6}
    x=awx.
  \end{equation}
  We obtain
  \begin{equation*}
    ax\stackrel{(\ref{temp4})}{=}(a^*ax)^*x=(ax)^*ax,
  \end{equation*}
  so $(ax)^*=ax$.
  Also,
  \begin{align}
    &axa=(ax)^*a=(a^*ax)^*\stackrel{(\ref{temp4})}{=}(a^*)^*=a \label{temp5} \\
    &ax^2\stackrel{(\ref{temp6})}{=}axawx\stackrel{(\ref{temp5})}{=}awx=x \nonumber \\
    &xax\stackrel{(\ref{temp6})}{=}xaawx\stackrel{(\ref{temp4})}{=}awx=x. \nonumber
  \end{align}
  The proof of the theorem is complete.
\end{proof}

\section{The case of $R=\B(H)$}

Let $H$ be arbitrary Hilbert space. In this section we consider the group, MP, core and dual core inverse in the case when $R$ is $\B(H)$, the algebra of all bounded linear operators on $H$. Of course, $\B(H)$ is a $*$-ring and all results from previous sections stay valid in the present setting. When $H$ is $n$-dimensional Hilbert space then we can identify $\B(H)$ with $M_n$.
We will denote by $\R(A)$ and $\N(A)$ the range and null-space of $A\in\B(H)$, respectively. Recall that $A\in\B(H)$ is regular if and only if $\R(A)$ is closed, \cite{dra}.
The ascent and descent of linear operator $A:H\rightarrow H$ are defined by
$$asc(A)=\inf_{p\in N}\{\N(A^p)=\N(A^{p+1})\},\;\;dsc(A)=\inf_{p\in N}\{\R(A^p)=\R(A^{p+1})\}.$$
If they are finite, they are equal and their common value is $\text{ind}(A),$ the index of $A.$ Also, $H=\R(A^{\text{ind}(A)})\oplus\N(A^{\text{ind}(A)})$ and $\R(A^{\text{ind}(A)})$ is closed, see \cite{dra}.

Let $A,B\in \B(H)$ such that $A\B(H)\subseteq B\B(H)$. Then there exist $Z\in\B(H)$ such that $A=BZ$ so $\R(A)\subseteq\R(B)$. If $B$ is regular and $\R(A)\subseteq\R(B)$ then $A=BB^{(1)}A$, where $B^{(1)}$ is arbitrary inner inverse of $B$.
Indeed, for every $x\in H$ there exists $z\in H$ such that $Ax=Bz$ so $$BB^{(1)}Ax=BB^{(1)}Bz=Bz=Ax.$$
Similarly, if $\B(H)A\subseteq\B(H)B$ then $\N(B)\subseteq\N(A)$. Suppose that $B$ is regular and $\N(B)\subseteq\N(A)$. Since $B(I-B^{(1)}B)=0$ we have $A(I-B^{(1)}B)=0$ so $A=AB^{(1)}B$; hence $\B(H)A\subseteq\B(H)B$. Let us consider the idempotents $P$, $Q$ and $R$ introduced in theorems \ref{idempotenti za grup inverz}--\ref{idempotenti za dual core inverz}.

\begin{lemma}\label{lemma za P, Q i R}
  Let $A\in\B(H)$. Then:
  \begin{enumerate}[{\rm (i)}]
    \item There exists self-adjoint idempotent $P$ such that $P\B(H)=A\B(H)$ if and only if $\R(A)$ is closed.
    \item There exists self-adjoint idempotent $R$ such that $\B(H)R=\B(H)A$ if and only if $\R(A)$ is closed.
    \item There exists idempotent $Q$ such that $Q\B(H)=A\B(H)$ and $\B(H)Q=\B(H)A$ if and only if $\ind(A)\leq 1$.
  \end{enumerate}
\end{lemma}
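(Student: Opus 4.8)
The plan is to reduce each of the three equivalences to material already established. For (i) and (ii) I would combine Theorem~\ref{idempotenti za MP inverz} with the facts that $\B(H)$ is a Rickart $*$-ring (so the Corollary following Theorem~\ref{idempotenti za MP inverz} applies) and that an operator on $H$ is regular precisely when its range is closed. For (iii) I would combine the equivalence (i)$\Leftrightarrow$(ii) of Theorem~\ref{idempotenti za grup inverz} with the classical fact that $A\in\B(H)$ is group invertible exactly when $\ind(A)\leq 1$.

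\emph{Parts (i) and (ii).} For sufficiency, assume $\R(A)$ is closed. Then $A$ is regular, hence MP invertible, so by Theorem~\ref{idempotenti za MP inverz} the self-adjoint idempotents $P:=AA^{\dag}$ and $R:=A^{\dag}A$ satisfy $P\B(H)=A\B(H)$ and $\B(H)R=\B(H)A$; this settles sufficiency in both (i) and (ii) at once. (Alternatively one takes $P$ and $R$ to be the orthogonal projections onto the closed subspaces $\R(A)$ and $\overline{\R(A^{*})}=\R(A^{*})$ and verifies the two ideal identities directly, using $A=PA=AR$, $P=AA^{(1)}P$ and $R=RA^{(1)}A$.) For necessity in (i), suppose a (possibly non-self-adjoint) idempotent $P$ satisfies $P\B(H)=A\B(H)$. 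Then $A\in P\B(H)$ and $P\in A\B(H)$, say $A=PZ$ and $P=AW$, whence $PA=P^{2}Z=PZ=A$ and therefore $A=PA=AWA$; thus $A$ is regular and $\R(A)$ is closed. Necessity in (ii) is symmetric: from $\B(H)R=\B(H)A$ write $A=ZR$ and $R=WA$, so $AR=ZR^{2}=ZR=A$ and $A=AR=AWA$.

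\emph{Part (iii).} By the equivalence (i)$\Leftrightarrow$(ii) of Theorem~\ref{idempotenti za grup inverz}, an idempotent $Q\in\B(H)$ with $Q\B(H)=A\B(H)$ and $\B(H)Q=\B(H)A$ exists if and only if $A$ is group invertible, so it remains to see that group invertibility of $A\in\B(H)$ is equivalent to $\ind(A)\leq 1$. If $\ind(A)\leq 1$, then $asc(A)\leq 1$ and $dsc(A)\leq 1$ give $\N(A)=\N(A^{2})$ and $\R(A)=\R(A^{2})$, and together with the quoted decomposition $H=\R(A^{\ind(A)})\oplus\N(A^{\ind(A)})$ with closed first summand this yields $H=\R(A)\oplus\N(A)$ with $\R(A)$ closed; the bounded idempotent $E$ onto $\R(A)$ along $\N(A)$ (bounded by the closed graph theorem) together with the inverse of the bounded bijection $A|_{\R(A)}:\R(A)\to\R(A)$ then produces $A^{\#}=(A|_{\R(A)})^{-1}E$. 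Conversely, if $A^{\#}$ exists, then $E:=AA^{\#}=A^{\#}A$ is a bounded idempotent with $\R(E)=\R(A)$ and $\N(E)=\N(A)$, so $\R(A)$ is closed, $H=\R(A)\oplus\N(A)$, hence $\N(A)=\N(A^{2})$, $\R(A)=\R(A^{2})$, and $asc(A)=dsc(A)=\ind(A)\leq 1$.

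\emph{The main obstacle.} The algebraic content is immediate from the earlier theorems; the only genuinely analytic steps are the passages between algebraic and topological formulations — that regularity is equivalent to closedness of the range in (i)/(ii), and, in (iii), that an algebraic direct-sum decomposition $H=\R(A)\oplus\N(A)$ with $\R(A)$ closed forces the associated oblique idempotent to be bounded and $A|_{\R(A)}$ to be boundedly invertible. One must also read the hypothesis $\ind(A)\leq 1$ as the statement that $asc(A)$ and $dsc(A)$ are finite and at most $1$, so that the quoted kernel/range stabilization facts apply.
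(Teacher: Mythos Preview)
Your argument is correct; the difference from the paper is organizational rather than substantive. The paper proves each part directly from the range/null-space dictionary established just before the lemma: for (i) it simply observes that $P\B(H)=A\B(H)$ forces $\R(A)=\R(P)$, which is closed because $P$ is a bounded idempotent, and conversely takes $P$ to be the orthogonal projection onto $\R(A)$; (ii) is obtained from (i) by passing to adjoints; and (iii) is argued directly by writing $Q=AU=VA$ to get $\R(A)=\R(A^{2})$, $\N(A)=\N(A^{2})$, and conversely by taking $Q$ to be the oblique idempotent onto $\R(A)$ along $\N(A)$. You instead factor through the earlier ring-theoretic results, reducing (i)/(ii) to Theorem~\ref{idempotenti za MP inverz} plus ``regular $\Leftrightarrow$ closed range'' and (iii) to Theorem~\ref{idempotenti za grup inverz} plus ``group invertible $\Leftrightarrow \ind(A)\leq 1$'', for which you supply a sketch. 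Both routes work; yours highlights that the lemma is just the specialization of the algebraic theorems to $\B(H)$, while the paper's is slightly more self-contained (in particular its necessity step in (i) needs only that $\R(P)$ is closed, not that regularity implies closed range).
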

\begin{proof}
   In the proof we use the observation stated before theorem. Note that every idempotent is regular.

  (i): If there exists self-adjoint idempotent $P$ such that $P\B(H)=A\B(H)$ then $\R(A)=\R(P)$, so $\R(A)$ is closed. Conversely, if $\R(A)$ is closed then $A$ is regular and there exists self-adjoint idempotent $P$ such that $\R(P)=\R(A)$ and $\N(P)=\R(A)^\bot=\N(A^*)$. It follows that $P\B(H)=A\B(H)$.

  (ii): This part follows by (i) because $\R(A)$ is closed if and only if $\R(A^*)$ is closed and $\B(H)R=\B(H)A$ if and only if $R\B(H)=A^*\B(H).$

  (iii): If there exists idempotent $Q$ such that $Q\B(H)=A\B(H)$ and $\B(H)Q=\B(H)A$ then $A=QA=AQ$ and there exist $U,V\in\B(H)$ such that $Q=AU=VA$. Therefore, $A=AQ=AAU$ and $A=QA=VAA$ so $\R(A)=\R(A^2)$ and $\N(A)=\N(A^2)$. Hence, $\ind(A)\leq 1$. If $\ind(A)\leq 1$ then $\R(A)$ is closed, hence regular, and $$H=\R(A)\oplus\N(A).$$ Therefore, there exists an idempotent $Q$ such that $\R(Q)=\R(A)$ and $\N(Q)=\N(A)$. It follows that $Q\B(H)=A\B(H)$ and $\B(H)Q=\B(H)A$.
\end{proof}

According to theorems \ref{idempotenti za grup inverz} and \ref{idempotenti za MP inverz} and Lemma \ref{lemma za P, Q i R}, we can conclude the well known facts that $A$ has MP inverse if and only if $\R(A)$ is closed and $A$ has group inverse if and only if $\ind(A)\leq 1$.
But, it also follows that $A$ has core and/or dual core inverse if and only if $\ind(A)\leq 1$. Hence, we generalized the result which is known for complex matrices, see \cite{BT}.

Let $s,t\in R$ are idempotents and let $a\in R$ has representation $a=\bmatrix a_{11} & a_{12} \\ a_{21} & a_{22}\endbmatrix_{s\times t}$, where $a_{11}=sat$ and so on. Suppose now that $S,T\in \B(H)$ are idempotents. Since $\B(H)$ is a ring, an arbitrary bounded operator $A\in\B(H)$ has analogous representation $A=[A_{ij}]_{S\times T}$. Set $S_1=S$, $S_2=I-S$, $T_1=T$, $T_2=I-T$ and let us define the operators
$$A_{ij}':\R(T_j)\rightarrow \R(S_i), \quad A_{ij}'x=S_iAT_jx=A_{ij}x, \, x\in\R(T_j).$$
Therefore, for $x=x_1+x_2\in \R(T_1)\oplus \R(T_2)$ we have
$$Ax=A_{11}'x_1+A_{12}'x_2+A_{21}'x_1+A_{22}'x_2,$$
or in the matrix form
$$A=\bmatrix A_{11}' & A_{12}' \\ A_{21}' & A_{22}'\endbmatrix : \bmatrix \R(T) \\ \N(T) \endbmatrix \rightarrow \bmatrix \R(S) \\ \N(S) \endbmatrix.$$
It is not difficult to show that $A$ is bounded if and only if $A_{ij}'$, $i,j=1,2$ is bounded, see \cite{Rakic}.

Suppose that $A\in B(H)$ has a close range. From the proof of Theorem \ref{lemma za P, Q i R} it follows that self-adjoint idempotent $P$ induces decomposition $H=\R(A)\oplus\N(A^*)$. Self-adjoint idempotent $R$ induces decomposition $H=\R(A^*)\oplus \N(A)$. If $\ind(A)\leq 1$ then there exists idempotent $Q$ and it induces decomposition $H=\R(A)\oplus\N(A)$. Similarly, $Q^*$ induces $H=\R(A^*)\oplus\N(A^*)$. We are able to restate the representations given in (\ref{matricne forme prosirene}). If $A\in\B(H)$ and $\ind(A)\leq 1$ then $A$ has the following representations:
\begin{align*}
  &A=\bmatrix A_1 & 0 \\ 0 & 0\endbmatrix : \bmatrix \R(A) \\ \N(A) \endbmatrix \rightarrow \bmatrix \R(A) \\ \N(A) \endbmatrix  \quad \quad A=\bmatrix A_1 & 0 \\ 0 & 0\endbmatrix : \bmatrix \R(A) \\ \N(A) \endbmatrix \rightarrow \bmatrix \R(A) \\ \N(A^*) \endbmatrix \\
  &A=\bmatrix A_2 & 0 \\ 0 & 0\endbmatrix : \bmatrix \R(A^*) \\ \N(A) \endbmatrix \rightarrow \bmatrix \R(A) \\ \N(A) \endbmatrix  \quad \quad A=\bmatrix A_2 & 0 \\ 0 & 0\endbmatrix : \bmatrix \R(A^*) \\ \N(A) \endbmatrix \rightarrow \bmatrix \R(A) \\ \N(A^*) \endbmatrix ,
\end{align*}
where $A_1x=QAQx=PAQx$, $x\in \R(A)$ and $A_2x=QARx=PARx$, $x\in \R(A^*)$. The remark after (\ref{matricne forme prosirene}), in present setting, actually means that $A_1$ and $A_2$ are invertible. The other representations may be restated similarly. For example, for $A^\core$ we have
\begin{align*}
  &A^\core=\bmatrix A_1^{-1} & 0 \\ 0 & 0\endbmatrix : \bmatrix \R(A) \\ \N(A^*) \endbmatrix \rightarrow \bmatrix \R(A) \\ \N(A) \endbmatrix  \quad \quad A^\core=\bmatrix A_1^{-1} & 0 \\ 0 & 0\endbmatrix : \bmatrix \R(A) \\ \N(A^*) \endbmatrix \rightarrow \bmatrix \R(A) \\ \N(A^*) \endbmatrix \\
  &A^\core=\bmatrix A_3 & 0 \\ 0 & 0\endbmatrix : \bmatrix \R(A^*) \\ \N(A^*) \endbmatrix \rightarrow \bmatrix \R(A) \\ \N(A) \endbmatrix  \quad \quad A^\core=\bmatrix A_3 & 0 \\ 0 & 0\endbmatrix : \bmatrix \R(A^*) \\ \N(A^*) \endbmatrix \rightarrow \bmatrix \R(A) \\ \N(A^*) \endbmatrix ,
\end{align*}
where $A_3:\R(A^*)\rightarrow \R(A)$ is invertible bounded operator, $A_3x=QA^\core Q^*x=PA^\core Q^*x$, $x\in\R(A^*)$.

\vspace{1cm}

\textbf{Acknowledgment}

\medskip


\vspace{0.5cm}
{\noindent}Dragan S. Raki\'c\\
Faculty of Mechanical Engineering,\\
University of Ni\v s,\\
Aleksandra Medvedeva 14, 18000 Ni\v s, Serbia\\
{\noindent} {\it E-mail:} {\tt rakic.dragan@gmail.com}

\vspace{0.5cm}
{\noindent}Neboj\v{s}a \v{C}. Din\v{c}i\'c and Dragan S. Djordjevi\' c\\
Faculty of Sciences and Mathematics,\\
University of Ni\v s, \\
P.O. Box 95,
18000 Ni\v s, Serbia

{\noindent}{\it E-mail}: {\tt ndincic@hotmail.com}

{\noindent}{\it E-mail}: {\tt dragandjordjevic70@gmail.com}

\end{document}